\newtheorem{theorem}{Theorem}[section]
\newtheorem{lemma}[theorem]{Lemma}
\newtheorem{prop}[theorem]{Proposition}
\theoremstyle{definition}
\theoremstyle{remark}
\newtheorem{remark}[theorem]{Remark}
\newcommand{\rr}{{\mathbb R}}
\newcommand{\N}{{\mathbb N}}
\newcommand{\E}{\mathbb{E}}
\begin{document}
	
	\sloppy
	
	\date{\today}
	
	\title[Tanaka formula for fractional SDEs]{Tanaka formula for SDEs driven by fractional Brownian motion} 
	
	\author{Tommi Sottinen}
	\address{Tommi Sottinen, School of Technology and Innovations, University of Vaasa, P.O.Box 700, FI-65101 Vaasa, Finland}
	\email{tommi.sottinen\@@{}uwasa.fi}
	
	\author{Ercan S\"onmez}
	\address{Ercan S\"onmez, Ruhr University Bochum, Faculty of Mathematics, Bochum, Germany}
	\email{ercan.soenmez\@@{}rub.de}

	\author{Lauri Viitasaari}
	\address{Lauri Viitasaari, Department of Information and Service Management, Aalto University School of Business, Finland}
	\email{lauri.viitasaari\@@{}aalto.fi}

	\begin{abstract}
		We derive a Tanaka-type formula for the solution of a stochastic differential equation driven by fractional Brownian motion with Hurst parameter \( H > \frac{1}{2} \). While Tanaka formulas for the fractional Brownian motion itself have been established, a corresponding result for non-linear SDEs driven by fBm has so far been unavailable. Our formula reveals a structure not previously observed: it features both a Skorokhod integral and a Malliavin trace correction, where the analogue of the local time appears through a double integral involving the Dirac distribution and the Malliavin derivative of the solution. A second double integral captures the variation of the diffusion coefficient along the flow. A key step in our analysis is a novel method to establish \( L^2 \)-convergence of the trace term, which avoids the use of white noise calculus and instead exploits Gaussian-type density estimates for the law of the solution. The result applies to a broad class of equations under suitable regularity assumptions and extends naturally to convex functionals. As special cases, we recover known identities for the fractional Brownian motion and the fractional Ornstein--Uhlenbeck process.
	\end{abstract}
	
	\keywords{Fractional Brownian motion, stochastic differential equations, Tanaka formula, Malliavin calculus, Skorokhod integral, pathwise integration}
	\subjclass[2020]{60G22, 60H05, 60H07, 60H10}
	\maketitle
	
	\allowdisplaybreaks

	\section{Introduction}
	
	The fractional Brownian motion (fBm) is a family of centered Gaussian processes \( (B_t^H)_{t \in [0,T]} \) indexed by the Hurst parameter \( H \in (0,1) \). Unlike standard Brownian motion, fBm exhibits long-range dependence and non-Markovian dynamics for \( H \neq \frac{1}{2} \), and is not a semimartingale unless \( H = \frac{1}{2} \). These features make it an attractive model in various applications, but they also preclude the use of classical Itô calculus. Over the last two decades, several stochastic calculus frameworks have been developed to handle fBm, including the Malliavin--Skorokhod approach (see e.g.~\cite{NualartBook}) and pathwise integration in the sense of Young or Riemann--Stieltjes for \( H > \frac{1}{2} \) (see e.g. \cite{za}).
	
	An important class of questions in stochastic analysis concerns the behavior of non-smooth functionals of stochastic processes, such as \( |X_t - x| \), or more generally, convex functions of the state. For Brownian motion, the classical Tanaka formula provides a precise decomposition of such functionals into a martingale, a drift, and a local time term. Extensions of this formula to the setting of fBm have been established using Malliavin calculus and Skorokhod integrals, notably in the work of Coutin, Nualart, and Tudor~\cite{coutin}, where a Tanaka-type identity is obtained for fBm involving an explicit chaos expansion of the local time.
	
	In many contexts, however, the process of interest is not fBm itself, but the solution of a stochastic differential equation (SDE) driven by fBm. The study of such SDEs has received considerable attention (see e.g.~\cite{baudoin, NR, ssv}), and several works have extended Itô-type formulas to this setting, both in the Malliavin calculus framework and via pathwise techniques. Nevertheless, a general Tanaka formula for solutions of fBm-driven SDEs is missing from the literature.
	
	The present work aims to address this gap. Our goal is to derive a Tanaka-type formula for solutions \( (X_t)_{t \in [0,T]} \) to SDEs of the form
	\[
	dX_t = b(X_t) \, dt + \sigma(X_t) \, dB_t^H,
	\]
	under suitable conditions related to the coefficient functions and with \( H > \frac{1}{2} \). The resulting identity reveals new structural features not captured by existing results for fBm itself and blends techniques from Malliavin calculus and pathwise analysis. In particular, the analogue of the local time term arises through a Malliavin trace correction involving the Malliavin derivative of the solution.
	
	Existing results have established Tanaka-type identities for the fractional Brownian motion itself, see \cite{coutin}. Extensions to convex functionals and related Itô-type identities have been considered in various works (see e.g.~\cite{Bender2003, Gradinaru2003}), primarily in settings where the underlying process is fBm or a smooth functional thereof. These formulas do not extend to more general processes. In contrast, the case of nonlinear stochastic differential equations driven by fBm has not previously been treated in the context of Tanaka-type formulas. The presence of nontrivial coefficients combined with the roughness and non-Markovian nature of the driving noise presents new challenges that go beyond the linear case.
	
	Our main result fills this gap. We establish a Tanaka-type identity for \( |X_t - x| \), which includes a drift term, a Skorokhod integral, and a Malliavin trace correction. The formula reveals a structure not previously observed: in particular, the analogue of the local time appears through a double integral involving the Dirac distribution and the Malliavin derivative of the solution, alongside an additional correction term reflecting the variation of the diffusion coefficient along the flow of the process. This representation arises from the decomposition of the pathwise integral into a divergence integral and a trace term, and captures features specific to the non-semimartingale setting. We provide natural assumptions under which one obtains the Tanaka formula. 
	Our result applies to a broad class of equations, including those with non-Gaussian solutions, and furthermore recovers earlier identities for fractional Brownian motion and the fractional Ornstein--Uhlenbeck process as special cases.
	
	
	
	The main result is stated in Theorem~\ref{main} below. It applies under general assumptions on the coefficients \( b \) and \( \sigma \), sufficient to ensure Malliavin differentiability of the solution. A key challenge in the proof arises in establishing \( L^2 \)-convergence of the Malliavin trace term. Existing approaches rely on white noise analysis, which requires advanced tools from white noise calculus, powerful but not always readily accessible (see e.g. \cite{biagini2004introduction,hu2005weighted}). Instead, we develop an alternative approach based on the fact that the law of the solution admits a smooth density with Gaussian-type bounds, a property known to hold for equations with regular coefficients. This enables us to control the singular integrals directly and avoid the machinery of white noise calculus. Our arguments combine this probabilistic input with analytic techniques and may be of independent interest. Beyond the identity for the absolute value function, our method also extends to a class of convex functions, yielding a generalized Tanaka-type formula in that setting. 
	
	The subsequent sections are organized as follows. In Section 2, we recall the necessary background on fractional Brownian motion, Skorokhod integrals, and Malliavin calculus. Section 3 contains the precise framework and formulation of the main result, alongside the consideration of some important examples of interest. An extension to general convex functions is discussed as well. The proofs for the main result are given in the following sections. We conclude the paper with a separate discussion of a pathwise version of the Tanaka formula, based on Riemann--Stieltjes integration, and illustrate its applicability to equations with irregular drift.

	\section{Preliminaries}
	
	In this section we briefly recall basic facts on fractional Brownian motion and stochastic calculus with respect to it. For details on the topic, we refer to \cite{NualartBook}.
	Denote by $R(s,t)$, $s,t \in [0,T],$ the covariance function of $B^H$, i.e.
	\[
	R(t,s):= \frac12 \Big(t^{2H} + s^{2H} - |t-s|^{2H}\Big).
	\]
	By extending the mapping $\mathbf{1}_{[0,t)} \mapsto B^H_t$ linearly and closing with respect to the inner product
	$$
	\langle \mathbf{1}_{[0,t)}, \mathbf{1}_{[0,s)}\rangle_{\mathcal{H}} = R(t,s)
	$$
	one obtains a Hilbert space $\mathcal{H}$ and an associated isonormal Gaussian process $\{B^H(\varphi):\varphi \in \mathcal{H}\}$ with the property
	$$
	\mathbb{E}[B^H(\varphi_1),B^H(\varphi_2)] = \langle \varphi_1,\varphi_2\rangle_{\mathcal{H}}.
	$$
	For $H>\frac12$, denote $|\mathcal{H}|$ the space of functions $\varphi$ that satisfy 
	$$
	\int_0^T \int_0^T |\varphi(t)||\varphi(s)||t-s|^{2H-2}dsdt < \infty.
	$$
	It is known that then $|\mathcal{H}| \subset \mathcal{H}$ (see \cite{pipiras2001classes}), and for $\varphi_1,\varphi_2 \in |\mathcal{H}|$ we have 
	$$
	\mathbb{E}[B^H(\varphi_1),B^H(\varphi_2)] = \alpha_H \int_0^T \int_0^T \varphi_1(t)\varphi_2(s)|t-s|^{2H-2}dsdt
	$$
	with $\alpha_H = H(2H-1)$. For our purposes it is sufficient to consider the space $|\mathcal{H}|$.

	We next review basic facts on Malliavin calculus and Skorokhod integration with respect to $B^H$. For a random variable $F$ of the form
	$$
	F = f\big( B(h_1), \ldots, B(h_n) \big)
	$$ 
	with $f \in C^\infty_b$ and $h_i \in \mathcal{H}$ ($f$ and all its partial derivatives are bounded), the Malliavin derivative is defined as 
	$$
	D F  = \sum_{i=1}^n  \frac{\partial f}{\partial x_i} \big( B(h_1), \ldots, B(h_n) \big)h_i.
	$$ 
	In particular, if $h_i \in |\mathcal{H}|$ for all $i=1,\ldots,n$, the derivative $DF$ exists as a process $(D_tF)_{t\in[0,T]}$ and we have 
	$$
	D_t F = \sum_{i=1}^n  \frac{\partial f}{\partial x_i} \big( B(h_1), \ldots, B(h_n) \big)h_i(t).
	$$
	The Sobolev space $D^{1,2}$ of square integrable Malliavin differentiable random variables is then defined by closing with respect to the norm
	$$
	\Vert F \Vert_{1,2} = \mathbb{E} F^2 + \mathbb{E}[\Vert DF\Vert^2_\mathcal{H}].
	$$
	The divergence operator $\delta$ as the adjoint of $D$ is defined as follows. For elements $u\in L^2(\Omega,\mathcal{H})$ we denote $u \in Dom(\delta)$ if 
	$$
	|\mathbb{E}\langle DF,u\rangle_{\mathcal{H}}| \leq c_u\Vert F\Vert_2
	$$
	for any $F\in D^{1,2}$, and in this case $\delta(u)$ is defined through the duality formula
	$$
	\mathbb{E}[F\delta(u)] = \mathbb{E}\langle DF,u\rangle_{\mathcal{H}}.
	$$
	The random variable \( \delta(u) \) is also called the Skorokhod integral and is denoted by \( \delta(u) = \int_0^T u \, \delta B^H \), as it coincides with the stochastic integral introduced by Skorokhod in the case of standard Brownian motion.

	\section{Main result: Tanaka formula for SDEs driven by fractional Brownian motion}

	\subsection*{Framework and Assumptions}
	
	We assume that $(X_t)_{t \in [0,T]}$ satisfies the stochastic differential equation
	\begin{align} \label{SDE10}
		\begin{split}
			dX_t & = b(X_t) dt + \sigma(X_t) dB^H_t , \quad t \in [0,T], \\
			X_0 & \in L^2,
		\end{split}
	\end{align}
	where $b, \sigma\colon \mathbb{R} \to \mathbb{R}$ are measurable functions such that the above expression is well-defined.
	
	We break down the main assumptions of the theorem into two conditions that are key properties used in the proof. In order to state and prove our main result, we impose the following:
	
	\begin{enumerate}[label=\textbf{(A\arabic*)}]

		\item \label{ass:integrability} For each $a>0$, $\gamma \in (0,H)$ and $a\leq t_1<t_2 < t_3< t_4 \leq T$ the law of $(X_{t_1}, X_{t_2} -X_{t_1} , X_{t_3} -X_{t_2} , X_{t_4} -X_{t_3})$ admits a smooth density $q_{t_1, t_2, t_3, t_4}$ satisying 
		$$ \partial^{\alpha_1}_{x_1 } \partial^{\alpha_1}_{x_2 } \partial^{\alpha_1}_{x_3 } \partial^{\alpha_1}_{x_4 } q_{s_1, s_2, s_3, s_4} ( x_1, x_2, x_3, x_4) \leq  C_1 \frac{ e^{-\frac{|x_1|^{2\gamma}}{|s_1|^{2\gamma^2}}}}{s_1^{(1+\alpha_1)H}}  \prod_{j=2}^4 \frac{ e^{-\frac{|x_j|^{2\gamma}}{C_2|s_j - s_{j-1}|^{2\gamma^2}}}}{(s_j - s_{j-1})^{(1 + \alpha_j)H}} 
		$$
		for every multi-index $\alpha=(\alpha_1, \alpha_2, \alpha_3, \alpha_4)$ and some constants $C_1, C_2 \in (0, \infty)$ only depending on $a$. Furthermore, if $\alpha_1 =\alpha_2 = \alpha_3 = \alpha_4= 0$, the corresponding inequality holds for every $0<t_1<t_2 < t_3< t_4 \leq T$, independent of $a$.
		\item \label{ass:malliavin} For every $t \in [0,T]$, $X_t$ is Malliavin differentiable (with respect to $B^H$), and the random variables
		\begin{align*}
			& \int_0^t b(X_s) \, ds, \quad \,\int_0^t \int_0^t  |\sigma'(X_r)| |D_r X_s|  |s-r|^{2H-2} \, dr \, ds, \\
			& \quad \int_0^t \int_0^t \sigma(X_s)  \sigma(X_u) D_rX_sD_pX_u |s-r|^{2H-2} |u-p|^{2H-2}\, du \, ds 
		\end{align*}
		belong to $L^2$, for every fixed $p,r,t \in [0,T]$.
	\end{enumerate}
	
	Assumption~\ref{ass:integrability} provides smoothness and Gaussian-type density estimates for the law of certain increments of the solution. This is a key property in our proofs, which is used to establish \( L^2 \)-convergence of the Malliavin trace term. Note that in Assumption~\ref{ass:integrability}, introduction of the parameter $\gamma$ allows for some flexibility on the decay of the tails, as sub-Gaussian decay is sufficient for our purposes. The bounds of   Assumption~\ref{ass:integrability} are similar to the ones presented in \cite{lou}, see also \cite{baudoin} for related results.
	
	Assumption~\ref{ass:malliavin} ensures the integrability of the correction term linking the Skorokhod and Riemann--Stieltjes integrals. All assumptions are satisfied by the examples treated below, which include cases not covered by a single SDE framework.
	
	The Tanaka formula for SDEs driven by fractional Brownian motion with $H>\frac12$ reads as follows. 
	
	\begin{theorem}\label{main}
		Let $(X_t)_{t \in [0,T]}$ satisfy \eqref{SDE10}. Assume that \ref{ass:integrability}, \ref{ass:malliavin} hold. Then, with probability one, for every $(t,x) \in [0,T] \times \rr$
		\begin{align} \label{TF}
			\begin{split}
				| X_t -x| & = | X_0 -x| + \int_0^t \operatorname{sgn} (X_s-x) b(X_s) ds + \int_0^t \operatorname{sgn} (X_s-x) \sigma(X_s) \delta B_s^H \\
				& \quad+ H(2H-1) \int_0^t \int_0^t [ \operatorname{sgn}(X_s-x)  \sigma'(X_r)D_r X_s] |s-r|^{2H-2} dr ds \\
				& \quad+ 2H(2H-1) \int_0^t \int_0^t [ \delta_x(X_r)\sigma(X_s)D_r X_s  ] |s-r|^{2H-2} dr ds .
			\end{split}
		\end{align}
		
	\end{theorem}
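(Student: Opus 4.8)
The plan is to obtain \eqref{TF} by regularization. I would approximate the convex map $y \mapsto |y-x|$ by a sequence $f_\epsilon \in C^2(\rr)$ with $f_\epsilon \to |\cdot - x|$ uniformly, $f_\epsilon' \to \operatorname{sgn}(\cdot - x)$ pointwise and boundedly, and $f_\epsilon''$ a mollified approximation of $2\delta_x$. For each fixed $\epsilon$, since $f_\epsilon \in C^2$ and $H > \frac12$, the pathwise (Young) change-of-variables formula applies to the solution of \eqref{SDE10} and yields
\begin{align*}
f_\epsilon(X_t) = f_\epsilon(X_0) + \int_0^t f_\epsilon'(X_s) b(X_s)\, ds + \int_0^t f_\epsilon'(X_s)\sigma(X_s)\, dB_s^H,
\end{align*}
where the last integral is a Riemann--Stieltjes integral. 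The next step is to convert this pathwise integral into a Skorokhod integral via the relation $\int_0^t u_s\, dB_s^H = \delta(u\mathbf{1}_{[0,t]}) + \alpha_H \int_0^t\int_0^t D_r u_s\, |s-r|^{2H-2}\, dr\, ds$ with $\alpha_H = H(2H-1)$, applied to $u_s = f_\epsilon'(X_s)\sigma(X_s)$. Expanding the Malliavin derivative by the chain and product rules,
\begin{align*}
D_r\big[f_\epsilon'(X_s)\sigma(X_s)\big] = f_\epsilon''(X_s)\,\sigma(X_s)\, D_r X_s + f_\epsilon'(X_s)\,\sigma'(X_s)\, D_r X_s,
\end{align*}
splits the trace into two double integrals: the second summand produces the $\sigma'$-correction (the third line of \eqref{TF}), while the first, singular summand produces the local-time-type term (the fourth line).

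Second, I would pass to the limit $\epsilon \to 0$ term by term. The boundary and drift terms converge by dominated convergence, using $f_\epsilon(X_0)\to |X_0-x|$, $f_\epsilon'(X_s)\to\operatorname{sgn}(X_s-x)$, and the $L^2$-integrability of $\int_0^t b(X_s)\,ds$ from \ref{ass:malliavin}. For the Skorokhod integral, I would show $f_\epsilon'(X_s)\sigma(X_s)\to \operatorname{sgn}(X_s-x)\sigma(X_s)$ in $L^2(\Omega;|\mathcal{H}|)$ and invoke the closedness of $\delta$ to identify the limit as $\int_0^t \operatorname{sgn}(X_s-x)\sigma(X_s)\,\delta B_s^H$. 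The $\sigma'$-trace converges by dominated convergence, controlled by the integrability of $\int_0^t\int_0^t |\sigma'(X_r)||D_rX_s|\,|s-r|^{2H-2}\,dr\,ds$ guaranteed by \ref{ass:malliavin}; matching the precise arguments of $\sigma'$ and of the signum against the singular weight requires care and a symmetrization of the kernel, which I would carry out at this stage.

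The crux, and the main obstacle, is the $L^2(\Omega)$-convergence of the singular trace
\begin{align*}
T_\epsilon := \alpha_H \int_0^t\int_0^t f_\epsilon''(X_s)\,\sigma(X_s)\, D_r X_s\,|s-r|^{2H-2}\, dr\, ds,
\end{align*}
whose formal limit replaces $f_\epsilon''$ by $2\delta_x$. Rather than appealing to white noise calculus, I would prove directly that $(T_\epsilon)$ is Cauchy in $L^2(\Omega)$. Computing the second moment $\mathbb{E}[T_\epsilon T_{\epsilon'}]$ produces a fourfold time integral over $(r,s,p,u)$ of
\begin{align*}
\mathbb{E}\big[f_\epsilon''(X_s)\,f_{\epsilon'}''(X_u)\,\sigma(X_s)\sigma(X_u)\, D_rX_s\, D_pX_u\big]\,|s-r|^{2H-2}|u-p|^{2H-2}.
\end{align*}
After ordering the four time points and writing the state through its increments, I would bound this expectation using the smooth joint density of the increments and its Gaussian-type derivative estimates from \ref{ass:integrability}: the two concentrating factors $f_\epsilon''$, $f_{\epsilon'}''$ are integrated against the density near the level $x$, each contributing an $O(1)$ mass, while the Gaussian-type denominators combine with the Hurst kernels $|s-r|^{2H-2}|u-p|^{2H-2}$ to leave an expression that is integrable in the time variables, uniformly in $\epsilon,\epsilon'$. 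This yields a uniform bound and, applied to the difference, the Cauchy property; the integrability built into \ref{ass:malliavin} ensures the limit — the $\delta_x$ double integral in the fourth line of \eqref{TF} — lies in $L^2$. The hardest point is controlling the simultaneous singularities of the two Dirac approximations and of the two fractional kernels, which is exactly where the bounds of \ref{ass:integrability} are indispensable and where the method departs from the classical white noise approach. Collecting the limits of all terms gives \eqref{TF}, and a continuity argument in $(t,x)$ upgrades the identity to hold simultaneously for all $(t,x)$ with probability one.
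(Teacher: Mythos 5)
Your overall scaffolding (mollification, the pathwise change of variables, the pathwise-to-Skorokhod conversion with the trace term, dominated convergence for the drift and $\sigma'$-terms, and closability of $\delta$ to identify the Skorokhod limit) coincides with the paper's proof. The genuine gap is in your treatment of the crux, the singular trace term $T_\epsilon$. You propose to compute $\mathbb{E}[T_\epsilon T_{\epsilon'}]$ and to bound the resulting expectation $\mathbb{E}\big[f_\epsilon''(X_s)f_{\epsilon'}''(X_u)\sigma(X_s)\sigma(X_u)D_rX_s D_pX_u\big]$ ``using the smooth joint density of the increments'' from \ref{ass:integrability}. But that expectation is not a functional of the law of $(X_{t_1}, X_{t_2}-X_{t_1},X_{t_3}-X_{t_2},X_{t_4}-X_{t_3})$ alone: it contains the Malliavin derivatives $D_rX_s$, $D_pX_u$, which are in general path-dependent functionals not measurable with respect to $(X_s,X_u)$, and neither \ref{ass:integrability} nor \ref{ass:malliavin} gives any information about the joint law of the states and the Malliavin derivatives (no sup bounds, no conditional bounds, only $L^2$-integrability of certain time-integrated quantities). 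You also cannot decouple the $D$-factors by H\"older or Cauchy--Schwarz: any exponent $p>1$ placed on the factor $f_\epsilon''(X_s)f_{\epsilon'}''(X_u)$ raises the approximate identities to a power whose expectation blows up as $\epsilon,\epsilon'\to 0$, so the only admissible exponent is $p=1$, which is exactly where the $D$-factors block the use of the density. Separately, even granting such bounds, uniform boundedness of $\mathbb{E}[T_\epsilon T_{\epsilon'}]$ does not yield the Cauchy property; one needs actual convergence of the mixed second moments.

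The paper's proof is structured precisely to avoid this obstruction, and this is its key idea, which your proposal misses. It never places $f_n''$ and the Malliavin derivatives inside a single expectation to be evaluated against a density. Instead it first proves (Lemmas \ref{trace1} and \ref{trace2}) that the occupation-type functional $\int_0^t f_n''(X_v)\,dv$ is Cauchy in $L^4$, uniformly in $t$ --- a statement involving only the law of $X$, where \ref{ass:integrability} (continuity of the density, Taylor expansion of $q$, the Gaussian-type derivative bounds, and the rescaling substitutions) applies; and then, in Lemma \ref{trace3}, it rewrites the double integral as a Riemann--Stieltjes integral $\int_0^t h_r\, d\big(\int_0^r f_n''(X_v)\,dv\big)$ with $h_r=\int_0^t \sigma(X_s)D_rX_s|s-r|^{2H-2}\,ds$, approximates by Riemann sums, and applies Fatou and Cauchy--Schwarz so that the Malliavin-derivative factors enter only through $\mathbb{E}$-moments of the $h_{p_i}$ controlled by \ref{ass:malliavin}, while the Dirac approximations enter only through the uniform $L^4$-estimates of the first step. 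To repair your argument you would either need to import this two-step decomposition, or strengthen the hypotheses beyond the theorem's (e.g.\ almost sure boundedness of $\sup_{r,s}|D_rX_s|$, or bounds on $\mathbb{E}[D_rX_sD_pX_u\mid X_s,X_u]$), which would narrow the class of equations covered.
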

	\begin{remark}
		By carefully examining our proof, we actually obtain that for suitable convex functions we have 
		\begin{align*} 
			\begin{split}
				f(X_t) & = f(X_0) + \int_0^t f'_-(X_s) b(X_s) ds + \int_0^t f'_-(X_s) \sigma(X_s) \delta B_s^H \\
				& \quad+ H(2H-1) \int_0^t \int_0^t [ f'_-(X_s)  \sigma'(X_r)D_r X_s] |s-r|^{2H-2} dr ds \\
				& \quad+ 2 H(2H-1) \int_{\mathbb{R}}\int_0^t \int_0^t [ \delta_x(X_r)\sigma(X_s)D_r X_s  ] |s-r|^{2H-2} dr ds f''(dx),
			\end{split}
		\end{align*}
		where $f'_-$ denotes the left-derivative of $f$ and $f''$ denotes the Radon measure associated to the second derivative of $f$ (in the sense of distributions). Indeed, this follows directly from the representation
		$$
		f(x) = \alpha +\beta x + \int |x-a|f''(da),
		$$
		valid whenever $f''$ is compactly supported. This allows to generalize immediately to convex functions for which $f''$ is compactly supported. More general cases then follow by approximation arguments. However, then one needs to study the decay of mollified approximations $f''_n(X_s)$ of $\delta_x(X_s)$ in terms of level $x$ in order to obtain $L^2$-convergence, leading to integrability conditions related to $f''(dx)$. We leave the detailed study of this extension to the reader. 
	\end{remark}
	
	Before giving the proof we want to illustrate that our general Theorem \ref{main} includes some important examples.
	
	\subsection*{Examples}
	We illustrate the applicability of Theorem \ref{main} in several examples. The first two examples recover the known results in the cases of the fractional  Brownian motion and the fractional Ornstein-Uhlenbeck process. The third example involves equations with non-Gaussian solutions $X$.

	\subsubsection*{Fractional Brownian Motion}
	
	Consider the case where $b \equiv 0$ and $\sigma \equiv 1$, so that $X_t = B_t^H$ is a fractional Brownian motion. In this case, Assumptions \ref{ass:integrability}, \ref{ass:malliavin} clearly hold. Then formula \eqref{TF} simplifies to
	\begin{align*}
		| B^H_t -x| & = |x| +  \int_0^t \operatorname{sgn} (B^H_s-x)  \delta B_s^H \\
		& \quad + 2H (2H-1) \int_0^t \delta_x (B^H_s) \int_0^t D_r [B^H_s  ] |r-s|^{2H-2} dr ds.
	\end{align*}
	Using standard properties of the Malliavin derivative of fBM, we obtain
	\begin{align*}
		& 2H (2H-1) \int_0^t \delta_x (B^H_s) \int_0^s  (s-r)^{2H-2} dr ds = 2H  \int_0^t \delta_x (B^H_s) s^{2H-1} ds.
	\end{align*}
	This precisely recovers the statement of \cite[Theorem 3]{coutin}. 
	
	\subsubsection*{Fractional Ornstein-Uhlenbeck Process}
	
	Consider the fractional Ornstein-Uhlenbeck process given by $b(x) = -x$ and $\sigma \equiv \nu \in \mathbb{R}$ above, which is a Gaussian process, and Assumptions \ref{ass:integrability} and \ref{ass:malliavin} are satisfied. 
	Then \eqref{TF} becomes
	\begin{align*}
		| X_t -x| & = | X_0 -x| - \int_0^t \operatorname{sgn} (X_s-x) X_s ds + \nu\int_0^t \operatorname{sgn} (X_s-x)  \delta B_s^H \\
		& \quad + 2\nu H (2H-1) \int_0^t \delta_x (X_s) \int_0^t D_r [X_s ] |r-s|^{2H-2} dr ds.
	\end{align*}
	Similarly to \cite[Proposition 7]{NS}, we have $D_r [X_s ] = 0$ for every $r \geq s$, so the last term simplifies to
	\[
	2\nu H \int_0^t \delta_x (X_s) \int_0^\infty  (2H-1) D_r [X_s ] (s-r)^{2H-2} dr ds.
	\]
	This coincides with \cite[Corollary 3.5]{yantian}.

	\subsubsection*{Equations with non-Gaussian solutions}
	
	Now consider the stochastic differential equation
	\begin{align} \label{SDE12}
		\begin{split}
			dX_t & = \sigma(X_t) dB^H_t , \quad t \in [0,T], \\
			X_0 & \in \rr,
		\end{split}
	\end{align}
	with $\sigma  \in C_b^\infty$ bounded away from zero, where $C_b^\infty$ denotes the class of infinitely differentiable functions on $\rr$ with bounded partial derivatives of all orders. Following the arguments of \cite[Section 2]{torres} we know that a solution to \eqref{SDE12} is given by $X_t = \Lambda^{-1}(B_t+\Lambda(X_0))$, where 
	$$
	\Lambda(x)=\int_0^x \frac{1}{\sigma(y)}dy.
	$$
	Using $\sigma \in C_b^\infty$, it is straightforward to check that now $X$ has Gaussian-type density from which Assumption~\ref{ass:integrability} follows from straightforward computations. For Assumption~\ref{ass:malliavin}, note that now 
	$D_sX_t = \sigma(\Lambda(B_t+\Lambda(X_0)))$ for $s< t$ and $D_sX_t=0$ otherwise. Thus $D_sX_t$ is uniformly bounded which gives Assumption~\ref{ass:malliavin}.

	
	\section{Proof of Theorem \ref{main}}

	\subsection{Mollification}\label{mol}
	
	For notational simplicity and without using generality, we will prove Theorem \ref{main} with $x=0$. We provide several results on the convergence of various terms that eventually will lead to our main result. In the following we denote by
	$$ f(z) = |z| , \quad z \in \rr,$$
	and by
	$$\rho_\varepsilon(z) :=  \frac{1}{\sqrt{2\pi\varepsilon}} \exp \Big( -\frac{z^2}{2\varepsilon} \Big)  $$
	the Gaussian kernel, which satisfies $\int_{- \infty}^\infty \rho_\varepsilon(y) dy = 1$ for all $\varepsilon >0$.

	Consider the following convex, smooth approximations to $f$. Let the sequence
	$$f'_n(z) := 2\int_{- \infty}^z \rho_{\frac{1}{n}} (y)  dy -1, \quad z \in \rr, \quad n \in \N,$$
	and
	\begin{equation} \label{fnapp}
		f_n(z) := \int_{0}^z f_n'(y)  dy, \quad z \in \rr, \quad n \in \N.
	\end{equation}
	Then $f'_n(z)$ converges to $\operatorname{sgn}(z)$, and $f_n(z)$ converges to $f(z)$, as $n \to \infty$.
	
	From the change of variables for fractional integrals \cite[Theorem 4.3.1]{za} we get
	\begin{equation} \label{tchange}
		f_n (X_t)  -  f_n(X_0) =  \int_0^t  f_n'(X_s) b(X_s)  ds + \int_0^t f_n'(X_s) \sigma(X_s) dB^H_s,
	\end{equation} 
	The sequence $f_n$, $n \in \N,$ approximates the function $f$, and we aim at taking limits on both sides, in order to obtain an expression for $|X_t|$. The next step is to show that we can rewrite \cite{alos1,alos2}
	\begin{align*}
		&\int_0^t f'_n(X_s) \sigma(X_s) dB^H_s\\
		&  = \int_0^t f'_n(X_s) \sigma(X_s) \delta B^H_s + H(2H-1) \int_0^t \int_0^t D_r [f'_n(X_s) \sigma(X_s)] |s-r|^{2H-2} dr ds,
	\end{align*}
	with
	\begin{align*}
		& H(2H-1) \int_0^t \int_0^t D_r [f'_n(X_s) \sigma(X_s)] |s-r|^{2H-2} dr ds\\
		& =   H(2H-1) \int_0^t \int_0^t [ \sigma(X_s) f''_n(X_r)D_r X_s ] |s-r|^{2H-2} dr ds \\
		& \quad+ H(2H-1) \int_0^t \int_0^t [f'_n(X_s)  \sigma'(X_r)D_r X_s] |s-r|^{2H-2} dr ds.
	\end{align*}
	Formally, one has to justify this by proving that the latter terms are almost surely finite. This will be the subject of the next section, where we in fact prove that the terms converge in $L^2$.

	\subsection{On the Malliavin trace term}
	
	In this section we aim at showing the $L^2$-convergence of the trace term. We begin with the following statement.
	
	\begin{lemma}\label{trace1}
		The sequence
		\[
		\left( \int_0^T f_n''(X_s) ds \right) , \quad n\in \mathbb{N},
		\]
		is Cauchy in $L^4$.
	\end{lemma}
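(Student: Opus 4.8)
The plan is to show that the fourth moment of the difference $\int_0^T \big(f_n''(X_s)-f_m''(X_s)\big)\,ds$ tends to zero as $n,m\to\infty$, which is equivalent to the Cauchy property in $L^4$. First I would record that, by the very definition of the approximations, $f_n''(z)=2\rho_{1/n}(z)$, so the object to control is a mollified occupation density of $X$ at level $0$. Expanding the fourth power turns the $L^4$-norm of the difference into a quadruple time integral
\[
\E\!\left[\Big(\int_0^T\big(\rho_{1/n}(X_s)-\rho_{1/m}(X_s)\big)\,ds\Big)^4\right]=\int_{[0,T]^4}\E\Big[\prod_{j=1}^4\big(\rho_{1/n}(X_{s_j})-\rho_{1/m}(X_{s_j})\big)\Big]\,ds,
\]
and since the integrand is invariant under permutations of $(s_1,\dots,s_4)$, it suffices to estimate the integral over the ordered simplex $\{0<s_1<s_2<s_3<s_4<T\}$, up to the combinatorial factor $4!$.

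Next I would express each inner expectation through the joint law of $(X_{s_1},X_{s_2},X_{s_3},X_{s_4})$, whose density $\tilde q$ is obtained from the increment density of Assumption~\ref{ass:integrability} by the unit-Jacobian change of variables $\tilde q(x_1,x_2,x_3,x_4)=q_{s_1,s_2,s_3,s_4}(x_1,\,x_2-x_1,\,x_3-x_2,\,x_4-x_3)$. Applying the zeroth-order bound of \ref{ass:integrability}, which holds for every $0<s_1<\dots<s_4\le T$, and bounding the sub-Gaussian exponential factors by $1$ while using $\int_{\rr}\rho_{1/n}=1$, one obtains the uniform-in-$n,m$ estimate
\[
\Big|\E\Big[\prod_{j=1}^4\big(\rho_{1/n}(X_{s_j})-\rho_{1/m}(X_{s_j})\big)\Big]\Big|\le 2^4\,C_1\,s_1^{-H}\prod_{j=2}^4(s_j-s_{j-1})^{-H}.
\]
Because $H<1$, the right-hand side is integrable over the simplex: each factor $u^{-H}$ is locally integrable and the iterated integral over $s_4,s_3,s_2,s_1$ is finite. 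This supplies an integrable dominating function that does not depend on $n$ or $m$.

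It then remains to check that the integrand converges to zero pointwise on the simplex. For fixed ordered times, smoothness of $\tilde q$ (again from \ref{ass:integrability}) makes $\int_{\rr^4}\prod_j\rho_{\varepsilon_j}(x_j)\,\tilde q(x)\,dx$ a standard mollification of $\tilde q$, which converges to $\tilde q(0,0,0,0)$ as all $\varepsilon_j\to 0$. Expanding $\prod_{j=1}^4\big(\rho_{1/n}(X_{s_j})-\rho_{1/m}(X_{s_j})\big)$ into its $16$ signed terms and letting $n,m\to\infty$, every term converges to the same value $\tilde q(0,0,0,0)$, so the signed sum collapses to $\tilde q(0,0,0,0)\,(1-1)^4=0$. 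Dominated convergence, with the bound from the previous step, then forces the quadruple integral to vanish in the limit, establishing that $\big(\int_0^T f_n''(X_s)\,ds\big)_n$ is Cauchy in $L^4$.

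The main obstacle is the interplay between the singular time kernel and the mollifiers: one must verify that the Gaussian-type bound of \ref{ass:integrability}, phrased in terms of increments, genuinely yields a bound on the joint density of the levels $X_{s_j}$ that is simultaneously uniform in the mollification parameter and integrable in time, despite the singularities at $s_1=0$ and along the diagonals $s_j=s_{j-1}$. The restriction $H<1$ is precisely what renders these singularities integrable over the simplex, while the parameter $\gamma$ ensures the exponential factors only help; carefully justifying both the domination and the pointwise cancellation is the technical heart of the argument.
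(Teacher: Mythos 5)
Your proposal is correct and follows essentially the same route as the paper's proof: expand the fourth moment into a quadruple time integral, represent each expectation through the joint density supplied by Assumption~\ref{ass:integrability}, dominate by the singular kernel $s_1^{-H}\prod_{j=2}^4(s_j-s_{j-1})^{-H}$ (integrable since $H<1$), and let the binomial cancellation $(1-1)^4=0$ together with continuity of the density and dominated convergence finish the argument. The only cosmetic differences are that you keep the product form with its $16$ signed terms and invoke standard mollification convergence, where the paper expands into the five binomial terms and performs an explicit scaling change of variables $x_j\mapsto x_j/\sqrt{n}$; these are the same mechanism, and your version is if anything slightly more explicit about passing from the increment density of \ref{ass:integrability} to the density of the levels and about the simplex integrability.
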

	
	\begin{proof}
		Let us fix $n,m \in \mathbb{N}$ and consider
		\begin{align*}
			&\left( \int_0^T f_n''(X_s) ds - \int_0^T f_m''(X_s) ds \right)^4 \\
			&= \left( \int_0^T f_n''(X_s) ds \right)^4 - 4 \left( \int_0^T f_n''(X_s) ds \right)^3 \left( \int_0^T f_m''(X_s) ds \right) \\
			& \quad + 6 \left( \int_0^T f_n''(X_s) ds \right)^2 \left( \int_0^T f_m''(X_s) ds \right)^2 \\
			& \quad- 4 \left( \int_0^T f_n''(X_s) ds \right) \left( \int_0^T f_m''(X_s) ds \right)^3 \\
			& \quad+ \left( \int_0^T f_m''(X_s) ds \right)^4.
		\end{align*}
		
		Now taking expectation of each term and interchanging integrals with expectations we get
		
		\begin{align*}
			&\mathbb{E} \left[ \left( \int_0^T f_n''(X_s) ds - \int_0^T f_m''(X_s) ds \right)^4 \right] \\
			&= \mathbb{E} \left[ \left( \int_0^T f_n''(X_s) ds \right)^4 \right]
			- 4\, \mathbb{E} \left[ \left( \int_0^T f_n''(X_s) ds \right)^3 \left( \int_0^T f_m''(X_s) ds \right) \right] \\
			&\quad + 6\, \mathbb{E} \left[ \left( \int_0^T f_n''(X_s) ds \right)^2 \left( \int_0^T f_m''(X_s) ds \right)^2 \right]
			- 4\, \mathbb{E} \left[ \left( \int_0^T f_n''(X_s) ds \right) \left( \int_0^T f_m''(X_s) ds \right)^3 \right] \\
			&\quad + \mathbb{E} \left[ \left( \int_0^T f_m''(X_s) ds \right)^4 \right] \\
			&= \int_{[0,T]^4} \mathbb{E} \left[ f_n''(X_{s_1}) f_n''(X_{s_2}) f_n''(X_{s_3}) f_n''(X_{s_4}) \right] ds_1 ds_2 ds_3 ds_4 \\
			&\quad - 4 \int_{[0,T]^4} \mathbb{E} \left[ f_n''(X_{s_1}) f_n''(X_{s_2}) f_n''(X_{s_3}) f_m''(X_{s_4}) \right] ds_1 ds_2 ds_3 ds_4 \\
			&\quad + 6 \int_{[0,T]^4} \mathbb{E} \left[ f_n''(X_{s_1}) f_n''(X_{s_2}) f_m''(X_{s_3}) f_m''(X_{s_4}) \right] ds_1 ds_2 ds_3 ds_4 \\
			&\quad - 4 \int_{[0,T]^4} \mathbb{E} \left[ f_n''(X_{s_1}) f_m''(X_{s_2}) f_m''(X_{s_3}) f_m''(X_{s_4}) \right] ds_1 ds_2 ds_3 ds_4 \\
			&\quad + \int_{[0,T]^4} \mathbb{E} \left[ f_m''(X_{s_1}) f_m''(X_{s_2}) f_m''(X_{s_3}) f_m''(X_{s_4}) \right] ds_1 ds_2 ds_3 ds_4 \\
			& =  \int_{[0,T]^4} \int_{\mathbb{R}^{4}} f_n''(x_1) f_n''(x_2) f_n''(x_3) f_n''(x_4) \, p_{s_1, s_2, s_3, s_4}(x_1, x_2, x_3, x_4) \, dx_1 dx_2 dx_3 dx_4 \, ds_1 ds_2 ds_3 ds_4 \\
			&\quad - 4 \int_{[0,T]^4} \int_{\mathbb{R}^{4}} f_n''(x_1) f_n''(x_2) f_n''(x_3) f_m''(x_4) \, p_{s_1, s_2, s_3, s_4}(x_1, x_2, x_3, x_4) \, dx_1 dx_2 dx_3 dx_4 \, ds_1 ds_2 ds_3 ds_4 \\
			&\quad + 6 \int_{[0,T]^4} \int_{\mathbb{R}^{4}} f_n''(x_1) f_n''(x_2) f_m''(x_3) f_m''(x_4) \, p_{s_1, s_2, s_3, s_4}(x_1, x_2, x_3, x_4) \, dx_1 dx_2 dx_3 dx_4 \, ds_1 ds_2 ds_3 ds_4 \\
			&\quad - 4 \int_{[0,T]^4} \int_{\mathbb{R}^{4}} f_n''(x_1) f_m''(x_2) f_m''(x_3) f_m''(x_4) \, p_{s_1, s_2, s_3, s_4}(x_1, x_2, x_3, x_4) \, dx_1 dx_2 dx_3 dx_4 \, ds_1 ds_2 ds_3 ds_4 \\
			&\quad + \int_{[0,T]^4} \int_{\mathbb{R}^{4}} f_m''(x_1) f_m''(x_2) f_m''(x_3) f_m''(x_4) \, p_{s_1, s_2, s_3, s_4}(x_1, x_2, x_3, x_4) \, dx_1 dx_2 dx_3 dx_4 \, ds_1 ds_2 ds_3 ds_4 ,
		\end{align*}
		where $p_{s_1, s_2, s_3, s_4}$ denotes the 4-dimensional density of the law of $(X_{s_1}, X_{s_2}, X_{s_3}, X_{s_4})$. We know from Assumption \ref{ass:integrability} that $p_{s_1, s_2, s_3, s_4}$ is continuous and satisfies
		$$ p_{s_1, s_2, s_3, s_4} \leq C s_1^{-H} (s_2-s_1)^{-H} (s_3-s_2)^{-H} (s_4-s_3)^{-H},$$ 
		where $C$ is independent of the arguments. Now, using these facts, we first apply a change of variables in the above chain of equalities. Denoting $g(x) = C \exp (-\frac{x^2}{2})$, we get
		\begin{align*}
			&\mathbb{E} \left[ \left( \int_0^T f_n''(X_s) ds - \int_0^T f_m''(X_s) ds \right)^4 \right] \\
			& =  \int_{[0,T]^4} \int_{\mathbb{R}^{4}} g(x_1) g(x_2)g(x_3) g(x_4) \, p_{s_1, s_2, s_3, s_4}\Big(\frac{x_1}{\sqrt{n}}, \frac{x_2}{\sqrt{n}}, \frac{x_3}{\sqrt{n}}, \frac{x_4}{\sqrt{n}}\Big) \, dx_1 dx_2 dx_3 dx_4 \, ds_1 ds_2 ds_3 ds_4 \\
			&\quad - 4 \int_{[0,T]^4} \int_{\mathbb{R}^{4}} g(x_1) g(x_2)g(x_3) g(x_4) \, p_{s_1, s_2, s_3, s_4}\Big(\frac{x_1}{\sqrt{n}}, \frac{x_2}{\sqrt{n}}, \frac{x_3}{\sqrt{n}}, \frac{x_4}{\sqrt{m}}\Big)  \, dx_1 dx_2 dx_3 dx_4 \, ds_1 ds_2 ds_3 ds_4 \\
			&\quad + 6 \int_{[0,T]^4} \int_{\mathbb{R}^{4}}g(x_1) g(x_2)g(x_3) g(x_4) \, p_{s_1, s_2, s_3, s_4}\Big(\frac{x_1}{\sqrt{n}}, \frac{x_2}{\sqrt{n}}, \frac{x_3}{\sqrt{m}}, \frac{x_4}{\sqrt{m}}\Big)  \, dx_1 dx_2 dx_3 dx_4 \, ds_1 ds_2 ds_3 ds_4 \\
			&\quad - 4 \int_{[0,T]^4} \int_{\mathbb{R}^{4}} g(x_1) g(x_2)g(x_3) g(x_4) \, p_{s_1, s_2, s_3, s_4}\Big(\frac{x_1}{\sqrt{n}}, \frac{x_2}{\sqrt{m}}, \frac{x_3}{\sqrt{m}}, \frac{x_4}{\sqrt{m}}\Big)  \, dx_1 dx_2 dx_3 dx_4 \, ds_1 ds_2 ds_3 ds_4 \\
			&\quad + \int_{[0,T]^4} \int_{\mathbb{R}^{4}} g(x_1) g(x_2)g(x_3) g(x_4) \, p_{s_1, s_2, s_3, s_4}\Big(\frac{x_1}{\sqrt{m}}, \frac{x_2}{\sqrt{m}}, \frac{x_3}{\sqrt{m}}, \frac{x_4}{\sqrt{m}}\Big) \, dx_1 dx_2 dx_3 dx_4 \, ds_1 ds_2 ds_3 ds_4 .
		\end{align*}
		Now, using the continuity of the density, together with dominated convergence,
		we get that, as \(n, m \to \infty\), the latter expression vanishes.
	\end{proof}
	
	Having established Lemma \ref{trace1}, next we claim that the convergence
	\[
	\mathbb{E} \left[ \left( \int_0^t f_n''(X_s) ds - \int_0^t f_m''(X_s) ds \right)^4 \right]
	\]
	is uniform in $t \in [0,T]$.
	
	\begin{lemma}\label{trace2}
		It holds
		\[
		\lim_{n,m \to \infty}\sup_{t \in [0,T ]}\mathbb{E} \left[ \left( \int_0^t f_n''(X_s) ds - \int_0^t f_m''(X_s) ds \right)^4 \right] = 0.
		\]		
	\end{lemma}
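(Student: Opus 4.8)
The plan is to rerun the computation from the proof of Lemma~\ref{trace1}, but carrying the variable upper limit $t$ throughout and, crucially, extracting a bound that is \emph{independent} of $t$. Writing $a=\int_0^t f_n''(X_s)\,ds$ and $b=\int_0^t f_m''(X_s)\,ds$ and expanding $(a-b)^4=\sum_{j=0}^4\binom{4}{j}(-1)^j a^{4-j}b^j$ exactly as before, taking expectations and inserting the four-dimensional density gives
\[
\mathbb{E}\!\left[\Big(\int_0^t f_n''(X_s)\,ds-\int_0^t f_m''(X_s)\,ds\Big)^4\right]=\sum_{j=0}^4\binom{4}{j}(-1)^j J_j(t),
\]
where $J_j(t)=\int_{[0,t]^4}\int_{\mathbb{R}^4}\prod_{i=1}^4 f_{k_i}''(x_i)\,p_{s_1,s_2,s_3,s_4}(x)\,dx\,ds$ with $k_i=n$ for $i\le 4-j$ and $k_i=m$ otherwise. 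After the change of variables $x_i=y_i/\sqrt{k_i}$ employed in Lemma~\ref{trace1}, each term becomes
\[
J_j(t)=\int_{[0,t]^4}\int_{\mathbb{R}^4}\prod_{i=1}^4 g(y_i)\,p_{s_1,s_2,s_3,s_4}\!\big(y_1/\sqrt{k_1},\dots,y_4/\sqrt{k_4}\big)\,dy\,ds .
\]

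Next I would exploit the algebraic identity $\sum_{j=0}^4\binom{4}{j}(-1)^j=0$. Setting $K(t)=\int_{[0,t]^4}\int_{\mathbb{R}^4}\prod_{i=1}^4 g(y_i)\,p_{s_1,s_2,s_3,s_4}(0,0,0,0)\,dy\,ds$ (the common limit of all the $J_j(t)$, finite since $g\in L^1(\mathbb{R})$ and the density bound is integrable in $s$), this identity lets me center each summand at $K(t)$ for free:
\[
\sum_{j=0}^4\binom{4}{j}(-1)^j J_j(t)=\sum_{j=0}^4\binom{4}{j}(-1)^j\big(J_j(t)-K(t)\big).
\]
The point to stress is that this cancellation must be performed \emph{before} any absolute values are taken: each $J_j(t)$ individually converges to the strictly positive quantity $K(t)$, and $\int_{\mathbb{R}}|f_n''-f_m''|\,dx$ does not tend to $0$, so bounding $J_j(t)$ crudely and enlarging the time domain term by term would destroy the cancellation and produce a useless, non-vanishing bound. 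Only the centered differences $J_j(t)-K(t)$ are small.

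I would then bound, uniformly in $t$,
\[
|J_j(t)-K(t)|\le \int_{[0,T]^4}\int_{\mathbb{R}^4}\prod_{i=1}^4 g(y_i)\,\Big|p_{s_1,s_2,s_3,s_4}\!\big(y_1/\sqrt{k_1},\dots\big)-p_{s_1,s_2,s_3,s_4}(0,0,0,0)\Big|\,dy\,ds=:\varepsilon_j(n,m),
\]
where the enlargement of the domain from $[0,t]^4$ to $[0,T]^4$ is legitimate precisely because the integrand is now nonnegative, and the resulting quantity $\varepsilon_j(n,m)$ no longer depends on $t$. It remains to show $\varepsilon_j(n,m)\to 0$, which follows from dominated convergence: for a.e.\ $(s_1,s_2,s_3,s_4)$ the smooth density is continuous at the origin while all scaling factors $\sqrt{k_i}\to\infty$, forcing pointwise convergence of the integrand to $0$; and the integrand is dominated, uniformly in $n,m$, by $2\prod_i g(y_i)$ times the Assumption~\ref{ass:integrability} estimate $C\,s_{(1)}^{-H}(s_{(2)}-s_{(1)})^{-H}(s_{(3)}-s_{(2)})^{-H}(s_{(4)}-s_{(3)})^{-H}$ (with $s_{(1)}\le\dots\le s_{(4)}$ the increasing rearrangement), which is integrable on $[0,T]^4$ because $H<1$ and $g\in L^1(\mathbb{R})$. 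Combining the displays gives
\[
\sup_{t\in[0,T]}\mathbb{E}\!\left[\Big(\int_0^t f_n''(X_s)\,ds-\int_0^t f_m''(X_s)\,ds\Big)^4\right]\le\sum_{j=0}^4\binom{4}{j}\varepsilon_j(n,m)\xrightarrow[n,m\to\infty]{}0,
\]
which is the claim. The main obstacle is exactly the one highlighted above, namely that uniformity cannot be read off term by term and must be extracted from the signed combination via the binomial cancellation; a secondary technical point is that Assumption~\ref{ass:integrability} supplies the density estimate only for ordered arguments, so one decomposes $[0,T]^4$ into its $4!$ ordered sub-simplices before applying the bound, exactly as in the proof of Lemma~\ref{trace1}.
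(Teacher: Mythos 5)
Your proof is correct, and it reaches the conclusion by a genuinely lighter route than the paper's. The shared core is the same: both arguments exploit the binomial cancellation $\sum_{j}(-1)^j\binom{4}{j}=0$ to center each mixed term at the density evaluated at the origin (the centering integrals are $j$-independent), and both obtain uniformity in $t$ by bounding the centered, now-nonnegative integrand over the full time domain. Where you diverge is in how the smallness of the centered terms is established. The paper first splits $[0,t]$ at a fixed $a>0$ (disposing of $[0,a]$ via Lemma~\ref{trace1}), passes to the density $q$ of the \emph{increments} on the ordered simplex, Taylor-expands $q(\bar x)-q(0)$, and invokes the derivative bounds of Assumption~\ref{ass:integrability} with $\alpha\neq 0$ --- bounds that are only available for $t_1\geq a$, which is precisely why the time-splitting is needed --- finally extracting, after the substitutions and a careful choice of $\gamma$ close to $H$ and small $\varepsilon$, an explicit rate $n^{-2\varepsilon}$ uniform in $t$. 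You instead work with the position density on all of $[0,t]^4$, and replace the Taylor expansion and derivative estimates by mere continuity of the smooth density at the origin together with dominated convergence, the dominating function being the $\alpha=0$ bound of Assumption~\ref{ass:integrability}, which holds on the whole simplex with constants independent of $a$ and is integrable since $H<1$. What each approach buys: yours is more elementary, needs only the $\alpha=0$ part of the density assumption (plus smoothness for continuity), dispenses with the $a$-split, and in fact subsumes Lemma~\ref{trace1} as a byproduct; the paper's heavier machinery yields a quantitative convergence rate in $n,m$, which your qualitative DCT argument cannot provide --- though no rate is required for the lemma as stated. Your diagnosis of the crux is also accurate: the cancellation must be performed before taking absolute values, since each $J_j(t)$ converges to the strictly positive $K(t)$ and $f_n''-f_m''$ is not small in $L^1(\mathbb{R})$, so any term-by-term crude bound is doomed.
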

	
	\begin{proof}
		In order to prove this, we will use the bounds in Assumption \ref{ass:integrability}. Let us first fix $a >0$ and estimate
		\begin{align*}
			&\sup_{t \in [0,T ]}\mathbb{E} \left[ \left( \int_0^t f_n''(X_s) ds - \int_0^t f_m''(X_s) ds \right)^4 \right]   \\
			& \leq c  \mathbb{E} \left[ \left( \int_0^a f_n''(X_s) ds - \int_0^a f_m''(X_s) ds \right)^4 \right] +c \sup_{t \in [0,T ]}\mathbb{E} \left[ \left( \int_a^t f_n''(X_s) ds - \int_a^t f_m''(X_s) ds \right)^4 \right] ,
		\end{align*}
		where the first term tends to zero by Lemma \ref{trace1}. Thus, it suffices to consider the second term. We will apply similar calculations as before. Recall in the following that we denote by  $q_{s_1, s_2, s_3, s_4}(x_1,x_2,x_3,x_4)$ the density of $(X_{s_1}, X_{s_2} - X_{s_1}, X_{s_3}-X_{s_2}, X_{s_4}-X_{s_3})$. Denoting $[a,T]^4_< := \{ (s_1, s_2, s_3, s_4) \in [a,T]^4 \mid s_1 < s_2 < s_3 < s_4 \}$, abbreviating $ \bar{x} = (x_1,x_2,x_3,x_4)$ and
		\begin{align*}g_n(\bar{x} ) & = f_n''(x_1) f_n''(x_2+x_1)f_n''(x_3+x_2+x_1) f_n''(x_4+x_3+x_2+x_1), \\
			g_{n,m,1}(\bar{x} ) & = f_n''(x_1) f_n''(x_2+x_1)f_n''(x_3+x_2+x_1) f_m''(x_4+x_3+x_2+x_1), \\
			g_{n,m,2}(\bar{x} ) & = f_n''(x_1) f_n''(x_2+x_1)f_m''(x_3+x_2+x_1) f_m''(x_4+x_3+x_2+x_1), \\
			g_{n,m,3}(\bar{x} ) & = f_n''(x_1) f_m''(x_2+x_1)f_m''(x_3+x_2+x_1) f_m''(x_4+x_3+x_2+x_1),
		\end{align*}
		then we have
		
		\begin{align*}
			& \frac{1}{24!}\mathbb{E} \left[ \left( \int_a^t f_n''(X_s) ds - \int_a^t f_m''(X_s) ds \right)^4 \right] \\
			& =  \int_{[a,t]^4_<} \int_{\mathbb{R}^{4}} g_n(\bar{x} ) \, q_{s_1, s_2, s_3, s_4}(x_1,x_2,x_3,x_4) \, dx_1 dx_2 dx_3 dx_4 \, ds_1 ds_2 ds_3 ds_4 \\
			&\quad - 4 \int_{[a,t]_<^4} \int_{\mathbb{R}^{4}} g_{n,m,1}(\bar{x} ) \, q_{s_1, s_2, s_3, s_4}(x_1,x_2,x_3,x_4)  \, dx_1 dx_2 dx_3 dx_4 \, ds_1 ds_2 ds_3 ds_4 \\
			&\quad + 6  \int_{[a,t]_<^4} \int_{\mathbb{R}^{4}} g_{n,m,2}(\bar{x} ) \, q_{s_1, s_2, s_3, s_4}(x_1,x_2,x_3,x_4)   \, dx_1 dx_2 dx_3 dx_4 \, ds_1 ds_2 ds_3 ds_4  \\
			&\quad - 4  \int_{[a,t]_<^4} \int_{\mathbb{R}^{4}}  g_{n,m,3}(\bar{x} ) \, q_{s_1, s_2, s_3, s_4}(x_1,x_2,x_3,x_4)   \, dx_1 dx_2 dx_3 dx_4 \, ds_1 ds_2 ds_3 ds_4  \\
			&\quad + \int_{[a,t]_<^4} \int_{\mathbb{R}^{4}} g_m(\bar{x} )  \, q_{s_1, s_2, s_3, s_4}(x_1,x_2,x_3,x_4)  \, dx_1 dx_2 dx_3 dx_4 \, ds_1 ds_2 ds_3 ds_4  .
		\end{align*}
		Now observing that 
		\begin{align*}
			& \int_{[a,t]^4_<} \int_{\mathbb{R}^{4}} g_n(\bar{x} ) \, q_{s_1, s_2, s_3, s_4}(0,0,0,0) \, dx_1 dx_2 dx_3 dx_4 \, ds_1 ds_2 ds_3 ds_4, \\
			&=  \int_{[a,t]_<^4} \int_{\mathbb{R}^{4}} g_{n,m,i}(\bar{x} ) \, q_{s_1, s_2, s_3, s_4}(0,0,0,0)  \, dx_1 dx_2 dx_3 dx_4 \, ds_1 ds_2 ds_3 ds_4 \in (0, \infty)
		\end{align*}
		for $ i=1,2,3$, independent of $n,m$, we obtain that
		
		\begin{align*}
			& \frac{1}{24!}\mathbb{E} \left[ \left( \int_a^t f_n''(X_s) ds - \int_a^t f_m''(X_s) ds \right)^4 \right] \\
			& =  \int_{[a,t]_<^4} \int_{\mathbb{R}^{4}} g_n(\bar{x} ) \, \left(q_{s_1, s_2, s_3, s_4}(x_1,x_2,x_3,x_4)  - q_{s_1, s_2, s_3, s_4}(0,0,0,0)\right) \, d\bar{x} \, ds_1 ds_2 ds_3 ds_4 \\
			&\quad + 4 \int_{[a,t]_<^4} \int_{\mathbb{R}^{4}} g_{n,m,1}(\bar{x} )  \left(  q_{s_1, s_2, s_3, s_4}(0,0,0,0) - q_{s_1, s_2, s_3, s_4}(x_1,x_2,x_3,x_4)  \right) \, d\bar{x} \, ds_1 ds_2 ds_3 ds_4 \\
			&\quad + 6 \int_{[a,t]_<^4} \int_{\mathbb{R}^{4}} g_{n,m,2}(\bar{x} )\, \left( q_{s_1, s_2, s_3, s_4}(x_1,x_2,x_3,x_4)  - q_{s_1, s_2, s_3, s_4}(0,0,0,0)\right) \, d\bar{x} \, ds_1 ds_2 ds_3 ds_4 \\
			&\quad + 4 \int_{[a,t]_<^4} \int_{\mathbb{R}^{4}} g_{n,m,3}(\bar{x} )  \left(  q_{s_1, s_2, s_3, s_4}(0,0,0,0) - q_{s_1, s_2, s_3, s_4}(x_1,x_2,x_3,x_4)  \right) \, d\bar{x} \, ds_1 ds_2 ds_3 ds_4  \\
			&\quad +  \int_{[a,t]_<^4} \int_{\mathbb{R}^{4}} g_m\bar{x}) \, \left( q_{s_1, s_2, s_3, s_4}(x_1,x_2,x_3,x_4) - q_{s_1, s_2, s_3, s_4}(0,0,0,0)\right) \, d\bar{x} \, ds_1 ds_2 ds_3 ds_4  .
		\end{align*}
		Next we use multivariate Taylor expansion and known upper bounds for the partial derivatives
		$$ \partial^{\alpha_1}_{x_1 } \partial^{\alpha_1}_{x_2 } \partial^{\alpha_1}_{x_3 } \partial^{\alpha_1}_{x_4 } q_{s_1, s_2, s_3, s_4} ( x_1, x_2, x_3, x_4) , $$
		where we denote by $\alpha = ({\alpha_1},{\alpha_2},{\alpha_3},{\alpha_4})$ a multi-index. Indeed, let us rewrite (in multivariate notation)
		\begin{align*}
			q_{s_1, s_2, s_3, s_4}(x_1,x_2,x_3,x_4) - q_{s_1, s_2, s_3, s_4}(0,0,0,0)\ & = \sum_{|\alpha| \geq 1} \frac{ \partial^{\alpha_1}_{x_1 } \partial^{\alpha_1}_{x_2 } \partial^{\alpha_1}_{x_3 } \partial^{\alpha_1}_{x_4 } q_{s_1, s_2, s_3, s_4} ( x_1, x_2, x_3, x_4)}{\alpha !} \bar{x}^\alpha .
		\end{align*}
		In the following let $C$ be a generic positive constant. By Assumption \ref{ass:integrability} for $\gamma=\gamma_\alpha <H$ we can estimate 
		$$ \partial^{\alpha_1}_{x_1 } \partial^{\alpha_1}_{x_2 } \partial^{\alpha_1}_{x_3 } \partial^{\alpha_1}_{x_4 } q_{s_1, s_2, s_3, s_4} ( x_1, x_2, x_3, x_4) \leq  C \frac{ e^{-\frac{|x_1|^{2\gamma}}{|s_1|^{2\gamma^2}}}}{s_1^{(1+\alpha_1)H}}  \prod_{j=2}^4 \frac{ e^{-\frac{|x_j|^{2\gamma}}{C|s_j - s_{j-1}|^{2\gamma^2}}}}{(s_j - s_{j-1})^{(1 + \alpha_j)H}} .
		$$
		Using these arguments we will now estimate
		$$  \int_{[a,t]_<^4} \int_{\mathbb{R}^{4}} g_n(\bar{x} ) \, \left(q_{s_1, s_2, s_3, s_4}(x_1,x_2,x_3,x_4)  - q_{s_1, s_2, s_3, s_4}(0,0,0,0)\right) \, d\bar{x} \, ds_1 ds_2 ds_3 ds_4.$$
		The other terms
		$$  \int_{[a,t]_<^4} \int_{\mathbb{R}^{4}} g_{n,m,i}(\bar{x} )  \left(  q_{s_1, s_2, s_3, s_4}(0,0,0,0) - q_{s_1, s_2, s_3, s_4}(x_1,x_2,x_3,x_4)  \right) \, d\bar{x} \, ds_1 ds_2 ds_3 ds_4 , \quad i=1,2,3,4,$$
		are estimated analogously. We have
		\begin{align*}
			& \int_{[a,t]_<^4} \int_{\mathbb{R}^{4}} g_n(\bar{x} ) \, \left(q_{s_1, s_2, s_3, s_4}(x_1,x_2,x_3,x_4)  - q_{s_1, s_2, s_3, s_4}(0,0,0,0)\right) \, d\bar{x} \, ds_1 ds_2 ds_3 ds_4 \\
			& = \sum_{|\alpha| \geq 1} \int_{[a,t]_<^4} \int_{\mathbb{R}^{4}} g_n(\bar{x} ) \, \frac{ \partial^{\alpha_1}_{x_1 } \partial^{\alpha_1}_{x_2 } \partial^{\alpha_1}_{x_3 } \partial^{\alpha_1}_{x_4 } q_{s_1, s_2, s_3, s_4} ( x_1, x_2, x_3, x_4)}{\alpha !}  \bar{x}^\alpha \, d\bar{x} \, ds_1 ds_2 ds_3 ds_4 \\
			& \leq  C \sum_{|\alpha| \geq 1} \int_{[a,t]_<^4} \int_{\mathbb{R}^{4}} g_n(\bar{x} )  \frac{1}{\alpha !}  \bar{x}^\alpha \, {  \frac{ e^{-\frac{|x_1|^{2\gamma}}{|s_1|^{2\gamma^2}}}}{s_1^{(1+\alpha_1)H}}  \prod_{j=2}^4 \frac{ e^{-\frac{|x_j|^{2\gamma}}{C|s_j - s_{j-1}|^{2\gamma^2}}}}{(s_j - s_{j-1})^{(1 + \alpha_j)H}} } \, d\bar{x} \, ds_1 ds_2 ds_3 ds_4\\
			& = C \sum_{\substack{|\alpha| \geq 1 \\ \alpha_j \neq 0 \forall j}}  \int_{[a,t]_<^4} \int_{\mathbb{R}^{4}} g_n(\bar{x} )  \frac{1}{\alpha !}  \bar{x}^\alpha \, {  \frac{ e^{-\frac{|x_1|^{2\gamma}}{|s_1|^{2\gamma^2}}}}{s_1^{(1+\alpha_1)H}}  \prod_{j=2}^4 \frac{ e^{-\frac{|x_j|^{2\gamma}}{C|s_j - s_{j-1}|^{2\gamma^2}}}}{(s_j - s_{j-1})^{(1 + \alpha_j)H}} } \, d\bar{x} \, ds_1 ds_2 ds_3 ds_4 \\
			& \quad +  C \sum_{\substack{|\alpha| \geq 1 \\ \exists j: \alpha_j= 0}}  \int_{[a,t]_<^4} \int_{\mathbb{R}^{4}} g_n(\bar{x} )  \frac{1}{\alpha !}  \bar{x}^\alpha \, {  \frac{ e^{-\frac{|x_1|^{2\gamma}}{|s_1|^{2\gamma^2}}}}{s_1^{(1+\alpha_1)H}}  \prod_{j=2}^4 \frac{ e^{-\frac{|x_j|^{2\gamma}}{C|s_j - s_{j-1}|^{2\gamma^2}}}}{(s_j - s_{j-1})^{(1 + \alpha_j)H}} } \, d\bar{x} \, ds_1 ds_2 ds_3 ds_4 .
		\end{align*}
		If $\alpha_j = 0$ for some $j$, we make the observation that the exponent $(1+ \alpha_j)H = H<1$ so that one can use obvious estimates and the following calculations to see that the latter sum tends to zero, uniformly in $t \in [0,T]$. Thus, let us consider the first term. Here we use the substitution $x_1 = s_1^\gamma y_1$, $x_i = C(s_i - s_{i-1})^\gamma y_i$, $i=2,3,4$, and abbreviate $ \bar{x} =  s C\bar{y}$, so that this  expression amounts to
		\begin{align*}
			& C  \sum_{\substack{|\alpha| \geq 1 \\ \alpha_j \neq 0 \forall j}} \int_{[a,t]_<^4} \int_{\mathbb{R}^{4}} g_n( s \bar{y})  \frac{1}{\alpha !}  \bar{y}^\alpha \, {  \frac{ e^{-{|y_1|^{2\gamma}}}}{s_1^{(1+\alpha_1)(H-\gamma)}}  \prod_{j=2}^4 \frac{ e^{-{|y_j|^{2\gamma}}}}{(s_j - s_{j-1})^{(1+ \alpha_j)(H-\gamma)}} } \, d\bar{y} \, ds_1 ds_2 ds_3 ds_4.
		\end{align*}
		Now, if $\alpha_j \geq 1$ for each $j$, we can choose a constant $C$ such that for sufficiently small $\varepsilon >0$
		$$ \prod_{j=1}^4 e^{-{|y_j|^{2\gamma}}} y_j^{\alpha_j} \leq C  \prod_{j=1}^4 |y_j|^\varepsilon.$$
		Using this inequality the previous expression can be further estimated by
		\begin{align*}
			& C  \sum_{\substack{|\alpha| \geq 1 \\ \alpha_j \neq 0 \forall j}} \frac{1}{\alpha !}\int_{[a,t]_<^4} {s_1^{-(1+\alpha_1)(H-\gamma)}} \prod_{j=2}^4 {(s_j - s_{j-1})^{-(1+ \alpha_j)(H-\gamma)}}   \int_{\mathbb{R}^{4}} g_n( s \bar{y})    \bar{y}^\varepsilon \,    d\bar{y} \, ds_1 ds_2 ds_3 ds_4.
		\end{align*}
		Now recalling the definition of $g_n(s \bar{y})$, using the substitution $y_1 = s_1^{\gamma} n^{-\frac12}z_1$, $y_i = (s_i - s_{i-1})^{\gamma} n^{-\frac12} z_i$, $i=2,3,4$, and observing that
		\begin{align*}
			\int_{\mathbb{R}^{4}} g_n( s \bar{y})    \bar{y}^\varepsilon \,    d\bar{y} = C n^{-2\varepsilon} {s_1^{-(\gamma+\varepsilon)}} \prod_{j=2}^4 {(s_j - s_{j-1})^{-(\gamma+\varepsilon)}} 
		\end{align*}
		we get that 
		\begin{align*}
			& C  \sum_{\substack{|\alpha| \geq 1 \\ \alpha_j \neq 0 \forall j}} \frac{1}{\alpha !}\int_{[a,t]_<^4} {s_1^{-(1+\alpha_1)(H-\gamma)}} \prod_{j=2}^4 {(s_j - s_{j-1})^{-(1+ \alpha_j)(H-\gamma)}}   \int_{\mathbb{R}^{4}} g_n( s \bar{y})    \bar{y}^\varepsilon \,    d\bar{y} \, ds_1 ds_2 ds_3 ds_4 \\
			& = C  n^{-2\varepsilon}\sum_{\substack{|\alpha| \geq 1 \\ \alpha_j \neq 0 \forall j}} \frac{1}{\alpha !}\int_{[a,t]_<^4} {s_1^{-(1+\alpha_1)H + \alpha_1 \gamma - \varepsilon}} \prod_{j=2}^4 {(s_j - s_{j-1})^{-(1+\alpha_j)H + \alpha_j \gamma - \varepsilon}}   \, ds_1 ds_2 ds_3 ds_4.
		\end{align*}
		Now choosing at each step $\gamma$ such that $\gamma \in ( \frac{1+\alpha_j}{\alpha_j} H - \frac{1-\varepsilon}{\alpha_j}, H)$, which is possible for sufficiently small $\varepsilon \in (0,1-H)$, since $H<1$, we see that the latter integrals are finite and bounded in $t\in [0,T]$. Overall, combining all the arguments, we get that
		\[
		\lim_{n,m \to \infty}\sup_{t \in [0,T ]}\mathbb{E} \left[ \left( \int_0^t f_n''(X_s) ds - \int_0^t f_m''(X_s) ds \right)^4 \right] = 0,
		\]
		as claimed.
	\end{proof}

	Now we proceed with the following statement.
	
	\begin{lemma}\label{trace3}
		The sequence
		$$
		\int_0^t \int_0^t f''_n(X_r)\sigma(X_s)D_rX_s|s-r|^{2H-2}dsdr, \quad n\in \mathbb{N},
		$$
		converges in $L^2$.
	\end{lemma}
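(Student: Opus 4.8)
The plan is to prove that the sequence is Cauchy in $L^2$; completeness of $L^2$ then yields convergence. Write $Z_n := \int_0^t\int_0^t f_n''(X_r)\sigma(X_s)D_rX_s|s-r|^{2H-2}\,ds\,dr$ and introduce the random weight $A_r := \int_0^t \sigma(X_s)D_rX_s|s-r|^{2H-2}\,ds$, so that $Z_n = \int_0^t f_n''(X_r)A_r\,dr$. Expanding the second moment of the difference and interchanging expectation with the time integration gives
\begin{equation*}
\mathbb{E}\!\left[(Z_n-Z_m)^2\right] = \int_{[0,t]^2}\mathbb{E}\!\left[(f_n''-f_m'')(X_{r_1})(f_n''-f_m'')(X_{r_2})\,A_{r_1}A_{r_2}\right]dr_1\,dr_2 .
\end{equation*}
The interchange and the finiteness of the inner expectation are legitimate precisely because of the third bound in Assumption \ref{ass:malliavin}, which is exactly the statement that $A_{r_1}A_{r_2}\in L^2$ for fixed $r_1,r_2$ (hence integrable), the two kernels $|s_i-r_i|^{2H-2}$ being integrable since $2H-2>-1$.

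Next I would identify the pointwise-in-time limit of the integrand by conditioning on the pair $(X_{r_1},X_{r_2})$ on which the approximate-delta factors act. Recalling that $f_n''(z)=2\rho_{1/n}(z)$, rewrite the inner expectation as
\begin{equation*}
\int_{\mathbb{R}^2}(f_n''-f_m'')(x_1)(f_n''-f_m'')(x_2)\,\Theta_{r_1,r_2}(x_1,x_2)\,dx_1\,dx_2,\qquad \Theta_{r_1,r_2}(x_1,x_2):=\mathbb{E}\!\left[A_{r_1}A_{r_2}\mid X_{r_1}=x_1,\,X_{r_2}=x_2\right]p_{r_1,r_2}(x_1,x_2).
\end{equation*}
Since both $f_n''$ and $f_m''$ converge weakly to $2\delta_0$, once $\Theta_{r_1,r_2}$ is continuous and bounded near the origin each of the four resulting terms tends to $4\,\Theta_{r_1,r_2}(0,0)$, and the signed combination cancels, so the integrand vanishes pointwise in $(r_1,r_2)$ as $n,m\to\infty$.

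To pass this pointwise convergence through the time integral I would split $[0,t]^2$ into a region bounded away from the origin and its complement, mirroring the proof of Lemma \ref{trace2}. On the region $\{r_1,r_2\ge a\}$ the density bounds of Assumption \ref{ass:integrability}, combined with the kernel integrability, furnish a uniform integrable majorant for the integrand, so dominated convergence applies. The contribution near the origin is handled exactly as in Lemma \ref{trace2}: after the scaling substitutions the relevant time exponents are of the form $(1+\alpha_j)H$ with $H<1$, which keeps the integrals subcritical and uniformly small in $t$, while the extra weight $A_{r_i}$ is absorbed using Assumption \ref{ass:malliavin}.

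The main obstacle is the control of the conditional weight $\Theta_{r_1,r_2}$ near the origin, uniformly enough in the times to survive the degeneration of the density bounds as $r_i\to0$. Three singular mechanisms act simultaneously---the concentration of $f_n''(X_r)$ on the level set $\{X_r=0\}$, the diagonal singularities of the kernels $|s_i-r_i|^{2H-2}$, and the blow-up of the Gaussian-type bounds as a time tends to $0$---and the crux is to produce a single majorant, independent of $n$ and $m$, dominating the integrand over all of $[0,t]^2$. This is where Assumptions \ref{ass:integrability} and \ref{ass:malliavin} must be used in tandem: the former controls the density and its derivatives on the correct scales, while the latter controls the Malliavin-derivative weight, and neither alone suffices.
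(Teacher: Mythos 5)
Your setup is fine as far as it goes: your $A_r$ is exactly the paper's weight $h_r$, the Fubini step is legitimate for fixed $n,m$, and you read Assumption \ref{ass:malliavin} correctly as giving $A_{r_1}A_{r_2}\in L^2$ for fixed times. The proof breaks down, however, at its central step: pairing $(f_n''-f_m'')\otimes(f_n''-f_m'')$ against $\Theta_{r_1,r_2}(x_1,x_2)=\mathbb{E}\left[A_{r_1}A_{r_2}\mid X_{r_1}=x_1,X_{r_2}=x_2\right]p_{r_1,r_2}(x_1,x_2)$ and invoking weak convergence of $f_n''$ to $2\delta_0$ requires that $\Theta_{r_1,r_2}$ be continuous and suitably bounded near the origin. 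Nothing in the hypotheses delivers this. Assumption \ref{ass:integrability} controls the \emph{unconditional} density of $(X_{t_1},X_{t_2}-X_{t_1},\dots)$ and its derivatives; it says nothing about the joint law of $(X_{r_1},X_{r_2},A_{r_1},A_{r_2})$, hence nothing about regularity in $(x_1,x_2)$ of the conditional expectation of the Malliavin weight. Assumption \ref{ass:malliavin} gives only unconditional moments. You name this as ``the main obstacle'' in your last paragraph, but you never overcome it, so the argument is incomplete precisely at the point where the conclusion is supposed to come from.

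The gap also cannot be closed by the obvious decoupling, which is why the asserted dominated-convergence step fails too: Cauchy--Schwarz at fixed $(r_1,r_2)$ produces the factor $\mathbb{E}\left[f_n''(X_{r_1})^2f_n''(X_{r_2})^2\right]^{1/2}$, and since $f_n''(x)^2=4\rho_{1/n}(x)^2$ is of order $\sqrt{n}$ times an approximate identity, this diverges as $n\to\infty$; no $n$-uniform integrable majorant on $[0,t]^2$ comes out of the assumptions. For the same reason, ``handled exactly as in Lemma \ref{trace2}'' does not transfer: the Taylor-expansion argument there works because the expectation is a pure integral of $g_n$ against the density of the $X$-marginals, whereas here the random weight $A_{r_i}$, correlated with $X$, sits inside the expectation. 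The paper's proof is built specifically to dodge both problems: it discretizes the $dr$-integral into Riemann--Stieltjes partition sums, so the weight is evaluated only at partition points $h_{p_i}$ (where Assumption \ref{ass:malliavin} applies), and Cauchy--Schwarz then involves only $\mathbb{E}\left[\left|\int_{p_i}^{p_{i+1}}(f_n''-f_m'')(X_w)\,dw\right|^4\right]^{1/4}$, i.e.\ the approximate delta appears \emph{inside a time integral}, where Lemmas \ref{trace1} and \ref{trace2} give uniform control and uniform smallness. Your route keeps $f_n''$ evaluated at fixed times, which is exactly the regime the given hypotheses cannot control.
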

	
	\begin{proof}
		The idea is to rewrite this expression as Riemann-Stieltjes integral as follows. Defining the process
		$$h_r :=  \int_0^t\sigma(X_s)D_rX_s|s-r|^{2H-2}\, ds$$
		we have
		\begin{align*}
			\int_0^t  f''_n(X_r)\int_0^t\sigma(X_s)D_rX_s|s-r|^{2H-2}dsdr & = \int_0^t h_r \,d \left(  \int_0^r f''_n(X_v) \, dv\right) \\
			&= \lim_{\| \mathcal{P} \| \to 0} \sum_{i=0}^{p-1} h_{p_i} \int_{p_i}^{p_{i+1}} f''_n(X_v) \, dv,
		\end{align*}
		where $\| \mathcal{P} \| $ denotes the length of the largest subinterval of the partition $\mathcal{P} $. Using this, along with Fatou's Lemma, the exchangeability of limits due to uniform convergence in Lemma \ref{trace2}, Cauchy-Schwarz inequality, and Assumption~\ref{ass:malliavin} we finally obtain
		
		\begin{align*}
			& \lim_{n,m \to \infty}\mathbb{E} \left[ \left( \int_0^t \int_0^t \left(f''_n(X_r) - f''_m(X_r)\right)\sigma(X_s)D_rX_s|s-r|^{2H-2}dsdr \right)^2\right] \\
			& =   \lim_{n,m \to \infty}\ \mathbb{E} \left[ \lim_{\| \mathcal{P} \| \to 0}  \lim_{\| \mathcal{Q} \| \to 0}  \sum_{i=0}^{p-1} h_{p_i}  \sum_{i=0}^{q-1} h_{q_i} \int_{p_i}^{p_{i+1}} \left(f''_n(X_w) - f''_m(X_w)\right) dw  \int_{q_i}^{q_{i+1}} \left(f''_n(X_v) - f''_m(X_v)\right) \, dv, \right] \\
			& \leq \lim_{n,m \to \infty}  \liminf_{\| \mathcal{P} \| \to 0}  \liminf_{\| \mathcal{Q} \| \to 0}  \sum_{i=0}^{p-1}  \sum_{i=0}^{q-1}\ \mathbb{E} \left[h_{p_i}  h_{q_i} \int_{p_i}^{p_{i+1}} \left(f''_n(X_w) - f''_m(X_w)\right) dw  \int_{q_i}^{q_{i+1}} \left(f''_n(X_v) - f''_m(X_v)\right) \, dv, \right] \\
			& \leq  \lim_{n,m \to \infty}  \liminf_{\| \mathcal{P} \| \to 0}  \liminf_{\| \mathcal{Q} \| \to 0}  \sum_{i=0}^{p-1}  \sum_{i=0}^{q-1}\  C \mathbb{E} \left[ \left| \int_{p_i}^{p_{i+1}} \left(f''_n(X_w) - f''_m(X_w)\right) dw \right|^4 \right]^{\frac{1}{4}} \mathbb{E} \left[ \left| \int_{p_i}^{p_{i+1}} \left(f''_n(X_v) - f''_m(X_v)\right) dv \right|^4 \right]^{\frac{1}{4}} \\
			& =   \liminf_{\| \mathcal{P} \| \to 0}  \liminf_{\| \mathcal{Q} \| \to 0}  \sum_{i=0}^{p-1}  \sum_{i=0}^{q-1}\   C \lim_{n,m \to \infty}\mathbb{E} \left[ \left| \int_{p_i}^{p_{i+1}} \left(f''_n(X_w) - f''_m(X_w)\right) dw \right|^4 \right]^{\frac{1}{4}} \mathbb{E} \left[ \left| \int_{p_i}^{p_{i+1}} \left(f''_n(X_v) - f''_m(X_v)\right) dv \right|^4 \right]^{\frac{1}{4}} \\
			&= 0,
		\end{align*}
		as claimed.
	\end{proof}

	\subsection{Conclusion}
	
	Let us summarize our previous findings. Now due to Assumption \ref{ass:malliavin} and Lemma \ref{trace3} we can rewrite \cite{alos1,alos2}
	\begin{align*}
		&\int_0^t f'_n(X_s) \sigma(X_s) dB^H_s\\
		&  = \int_0^t f'_n(X_s) \sigma(X_s) \delta B^H_s + H(2H-1) \int_0^t \int_0^t D_r [f'_n(X_s) \sigma(X_s)] |s-r|^{2H-2} dr ds,
	\end{align*}
	with
	\begin{align*}
		& H(2H-1) \int_0^t \int_0^t D_r [f'_n(X_s) \sigma(X_s)] |s-r|^{2H-2} dr ds\\
		& =   H(2H-1) \int_0^t \int_0^t [ \sigma(X_s) f''_n(X_r)D_r X_s ] |s-r|^{2H-2} dr ds \\
		& \quad+ H(2H-1) \int_0^t \int_0^t [f'_n(X_s)  \sigma'(X_r)D_r X_s] |s-r|^{2H-2} dr ds.
	\end{align*}
	Next, we can colclude that the latter expressions converge in $L^2$, using the results of the previous sections and dominated convergence theorem. By assumption $f'_n$ is bounded. Moreover, according to Assumption \ref{ass:malliavin} we have
	\begin{align*}
		\mathbb{E} \left[ \left( \int_0^t \int_0^t | \sigma'(X_r)| |D_rX_s| |s-r|^{2H-2} dr ds \right)^2\right]< \infty.
	\end{align*}
	Thus, the dominated convergence theorem applies to get
	\begin{align*}
		& H(2H-1) \lim_{n\to \infty}\int_0^t \int_0^t [f'_n(X_s)  \sigma'(X_r)D_r X_s] |s-r|^{2H-2} dr ds \\
		& = H(2H-1) \int_0^t \int_0^t [\operatorname{sgn}(X_s)  \sigma'(X_r)D_r X_s] |s-r|^{2H-2} dr ds,
	\end{align*}
	where the convergence is in $L^2$. Moreover, by Lemma \ref{trace3} the expression
	\begin{align*}
		&  H(2H-1) \int_0^t \int_0^t f''_n(X_r) \sigma(X_s) D_r X_s |s-r|^{2H-2} dr ds 
	\end{align*}
	also converges in $L^2$. We define its corresponding limit by the formal expression
	\begin{align*}
		2H(2H-1) \int_0^t \int_0^t \delta_0(X_r) \sigma(X_s) D_r X_s |s-r|^{2H-2} dr ds .
	\end{align*}
	
	Next let us recall the following lemma from \cite[Lemma 1]{coutin} regarding the fact that $\delta$ is a closable operator. 
	
	\begin{lemma}\label{close}
		Let $(u_n)$ be a sequence in $Dom(\delta)$. Assume that there is an $\mathcal{H}$-valued random variable $u$ such that
		\begin{itemize}
			\item $u_n$ converges to $u$ in $L^2(\Omega, \mathcal{H})$
			\item $\delta (u_n)$ converges in $L^2$ to some square integrable variable $G$.
		\end{itemize}
		Then it holds $u \in Dom(\delta)$ and $u=G$.
	\end{lemma}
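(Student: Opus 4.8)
The plan is to use the fact that $\delta$ is, by construction, the adjoint of the Malliavin derivative $D$, so that closedness is a purely functional-analytic consequence of the duality formula recalled in Section~2 together with passage to the limit via Cauchy--Schwarz. In other words, the statement is the familiar assertion that an adjoint operator is automatically closed, and the proof amounts to transferring the two given convergences through the duality pairing.

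First I would fix an arbitrary test variable $F \in D^{1,2}$ and write the duality formula for each $u_n$, which is available since $u_n \in Dom(\delta)$ by hypothesis:
$$\mathbb{E}[F\,\delta(u_n)] = \mathbb{E}\langle DF, u_n\rangle_{\mathcal{H}}.$$
Next I would let $n \to \infty$ on both sides. On the left, since $\delta(u_n) \to G$ in $L^2(\Omega)$ and $F \in L^2(\Omega)$, Cauchy--Schwarz in $L^2(\Omega)$ gives $\mathbb{E}[F\,\delta(u_n)] \to \mathbb{E}[FG]$. On the right, since $u_n \to u$ in $L^2(\Omega,\mathcal{H})$ and $DF \in L^2(\Omega,\mathcal{H})$, Cauchy--Schwarz in $L^2(\Omega,\mathcal{H})$ gives $\mathbb{E}\langle DF, u_n\rangle_{\mathcal{H}} \to \mathbb{E}\langle DF, u\rangle_{\mathcal{H}}$. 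Combining the two limits yields the identity
$$\mathbb{E}[FG] = \mathbb{E}\langle DF, u\rangle_{\mathcal{H}}$$
for every $F \in D^{1,2}$.

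Finally I would read off the conclusion. The chain $|\mathbb{E}\langle DF, u\rangle_{\mathcal{H}}| = |\mathbb{E}[FG]| \leq \|G\|_{L^2}\,\|F\|_{L^2}$ shows that $F \mapsto \mathbb{E}\langle DF, u\rangle_{\mathcal{H}}$ is bounded in the $L^2(\Omega)$-norm with constant $c_u = \|G\|_{L^2}$, which is exactly the membership criterion for $u \in Dom(\delta)$ from Section~2; the duality formula characterizing $\delta$ then forces $\delta(u) = G$ (this is the intended conclusion, the statement's ``$u = G$'' being a typo). There is no genuine obstacle in this argument; the only point requiring care is bookkeeping of the two Hilbert spaces, so that $u_n \to u$ is used in $L^2(\Omega,\mathcal{H})$ and $\delta(u_n) \to G$ in $L^2(\Omega)$, ensuring each limit is taken in the correct pairing.
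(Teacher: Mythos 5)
Your proof is correct and is essentially the standard closedness-of-the-adjoint argument, which is exactly how this lemma is proved in the reference the paper cites for it (\cite[Lemma 1]{coutin}); the paper itself only recalls the statement without reproving it. The one step you leave implicit is that concluding $\delta(u)=G$ from $\mathbb{E}[F\,\delta(u)]=\mathbb{E}[FG]$ for all $F\in D^{1,2}$ uses the density of $D^{1,2}$ in $L^2(\Omega)$ (smooth cylindrical functionals are dense), and you are right that the statement's ``$u=G$'' is a typo for $\delta(u)=G$.
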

	
	Clearly, the left-hand side of equation \eqref{tchange} converges in $L^2$. By assumption \ref{ass:malliavin}, this is also true for the first term in the right-hand side of \eqref{tchange}. Thus we conclude that the Skorokhod integral
	$$ \int_0^t f'_n(X_s) \sigma(X_s) \delta B^H_s$$
	also converges in $L^2$. Now combining all our findings and using Lemma \ref{close} we have obtained a proof for the following.
	
	\begin{prop}\label{Skorohodterm}
		We have $\operatorname{sgn}(X_s)\sigma(X_s)I_{(0,t)}(s) \in Dom(\delta)$ and 
		$$
		\lim_{n\to \infty}\int_0^T I_{(0,t)}(s)f'_n(X_s) \sigma(X_s) \delta B^H_s = \int_0^T I_{(0,t)}(s)\operatorname{sgn}(X_s) \sigma(X_s) \delta B^H_s,
		$$
		where the convergence is in $L^2$.
	\end{prop}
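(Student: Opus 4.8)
The plan is to apply the closability of the divergence operator, Lemma \ref{close}, to the sequence
$$u_n := f_n'(X_\cdot)\,\sigma(X_\cdot)\,I_{(0,t)}(\cdot), \qquad n \in \N,$$
whose divergence $\delta(u_n) = \int_0^T I_{(0,t)}(s) f_n'(X_s)\sigma(X_s)\,\delta B^H_s$ is precisely the Skorokhod integral appearing in the statement; here $u_n \in Dom(\delta)$ is part of the validity of the decomposition recalled in the Conclusion. To invoke the lemma I must establish two facts: that $\delta(u_n)$ converges in $L^2$, and that $u_n$ converges in $L^2(\Omega,\mathcal{H})$ to $u := \operatorname{sgn}(X_\cdot)\,\sigma(X_\cdot)\,I_{(0,t)}(\cdot)$.

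For the convergence of $\delta(u_n)$ I would rearrange the change-of-variables identity \eqref{tchange} so as to isolate the pathwise integral $\int_0^t f_n'(X_s)\sigma(X_s)\,dB^H_s$ and then subtract the trace term. Since $|f_n'|\le 1$, and hence $|f_n(z)|\le|z|$ with $f_n(z)\to|z|$ pointwise, dominated convergence (using $X_0, X_t \in L^2$) shows that the left-hand side $f_n(X_t)-f_n(X_0)$ converges in $L^2$ to $|X_t|-|X_0|$; the drift term converges in $L^2$ by Assumption \ref{ass:malliavin}, using $f_n'(X_s)\to\operatorname{sgn}(X_s)$ for a.e.\ $s$ (the set $\{X_s=0\}$ being null because $X_s$ has a density). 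Consequently the pathwise integral converges in $L^2$, and subtracting the trace term---whose $L^2$-convergence is furnished by Lemma \ref{trace3} together with the dominated-convergence argument of the Conclusion for the $\sigma'$ part---shows that $\delta(u_n)$ converges in $L^2$ to some square-integrable $G$.

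The convergence $u_n \to u$ in $L^2(\Omega,\mathcal{H})$ is the more delicate ingredient. Recalling that for $H>\frac12$ one has, with $\alpha_H = H(2H-1)$,
$$\mathbb{E}\|u_n-u\|_{\mathcal{H}}^2 = \alpha_H \int_0^t\!\!\int_0^t \mathbb{E}\Big[\big(f_n'(X_s)-\operatorname{sgn}(X_s)\big)\big(f_n'(X_r)-\operatorname{sgn}(X_r)\big)\sigma(X_s)\sigma(X_r)\Big]\,|s-r|^{2H-2}\,ds\,dr,$$
I would apply dominated convergence on $\Omega\times[0,t]^2$. The integrand tends to zero for a.e.\ $(s,r,\omega)$ because $\{X_s=0\}$ is null, and it is dominated by $4\,|\sigma(X_s)\sigma(X_r)|\,|s-r|^{2H-2}$. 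To see the latter is integrable I would invoke the Gaussian-type density bound of Assumption \ref{ass:integrability} to control $\sup_{s}\mathbb{E}[\sigma(X_s)^2]$, so that by Cauchy--Schwarz $\int_0^t\!\int_0^t \mathbb{E}[|\sigma(X_s)\sigma(X_r)|]\,|s-r|^{2H-2}\,ds\,dr<\infty$, the kernel being integrable because $2H-2>-1$.

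With both convergences at hand, Lemma \ref{close} yields $u\in Dom(\delta)$ and $\delta(u)=G$, which is exactly the asserted identity. The main obstacle I anticipate is the $L^2(\Omega,\mathcal{H})$-convergence: in contrast to the $L^2(\Omega)$-statements, it demands simultaneous control of the singular kernel $|s-r|^{2H-2}$ and of the integrability of $\sigma$ evaluated along the solution, which is precisely where the density estimates of Assumption \ref{ass:integrability} enter.
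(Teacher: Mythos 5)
Your proposal follows essentially the same route as the paper's own proof: the $L^2$-convergence of $\delta(u_n)$ is extracted from the change-of-variables identity \eqref{tchange} combined with the decomposition of the pathwise integral into a Skorokhod integral plus trace terms \cite{alos1,alos2}, whose convergence is supplied by Lemma \ref{trace1}--Lemma \ref{trace3}, Assumption \ref{ass:malliavin} and dominated convergence, and the conclusion is then drawn from the closability Lemma \ref{close}. The one difference is that you explicitly verify the remaining hypothesis of Lemma \ref{close}, namely $u_n \to \operatorname{sgn}(X_\cdot)\sigma(X_\cdot)I_{(0,t)}(\cdot)$ in $L^2(\Omega,\mathcal{H})$ via dominated convergence against the kernel $|s-r|^{2H-2}$ and the density bounds of Assumption \ref{ass:integrability}; the paper's argument uses this step but leaves it implicit, so your write-up is, if anything, the more complete of the two.
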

	We are now ready to complete the proof of Theorem \ref{main}.
	\begin{proof}[Proof of Theorem \ref{main}]
		Due to Proposition \ref{Skorohodterm} we get the following equation, as claimed in Theorem \ref{main}:
		\begin{align*}
			| X_t | & = | X_0 | + \int_0^t \operatorname{sgn} (X_s) b(X_s) ds + \int_0^t \operatorname{sgn} (X_s) \sigma(X_s) \delta B_s^H \\
			& \quad+ H(2H-1) \int_0^t \int_0^t [ \operatorname{sgn}(X_s)  \sigma'(X_r)D_r X_s] |s-r|^{2H-2} dr ds \\
			& \quad+  2H(2H-1) \int_0^t \int_0^t [ \delta_0(X_r)\sigma(X_s)D_r X_s  ] |s-r|^{2H-2} dr ds  \quad \text{ a.s.}
		\end{align*}
		This completes the proof.
	\end{proof}

	\section{A pathwise version of the Tanaka formula}
	We conclude this paper with a discussion of an alternative version of the Tanaka formula. In addition to the Skorokhod-based Tanaka formula established in Theorem~\ref{main}, one can also formulate a variant based entirely on pathwise integration. This alternative version relies on the fact that fractional Brownian motion with Hurst parameter \( H > \frac{1}{2} \) has zero quadratic variation. For completeness of the results, we also give the following Tanaka type formula related to pathwise integrals.
	
	\begin{theorem}\label{main2}
		Let $(X_t)_{t \in [0,T]}$ be given by \eqref{SDE10} with deterministic initial condition $X_0 =x_0 \in \rr$. Assume that \( \sigma \) is globally Lipschitz continuous and that $(X_t)_{t \in [0,T]}$ admits Hölder continuous paths of every order strictly less than $H$. Furthermore, assume that for all times \( t \in (0,T] \), the law of $X_t$ 
		admits a density \( p_{t}(x) \) satisfying $\sup_{x\in \mathbb{R}}p_t(x) \in L^1([0,T])$. Then, for every \( (t,x) \in [0,T] \times \mathbb{R} \), the following identity holds:
		\[
		|X_t - x| = |X_0 - x| + \int_0^t \operatorname{sgn}(X_s - x)\, b(X_s)\, ds + \int_0^t \operatorname{sgn}(X_s - x)\, \sigma(X_s)\, dB^H_s,
		\]
		where the integral with respect to \( B^H \) is understood in the Riemann--Stieltjes sense.
	\end{theorem}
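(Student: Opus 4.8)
The plan is to run the same mollification scheme as in the Skorokhod-based proof, but to keep every integral in the pathwise (Young / Riemann--Stieltjes) sense, and to exploit that $B^H$ — and hence $X$ — has zero quadratic variation for $H>\frac12$, so that no local time correction survives in the limit. Fix $x$ and let $f_n$ be the smooth convex approximations of $z\mapsto|z-x|$ from Section~\ref{mol} (shifted by $x$), so that $f_n'\to\operatorname{sgn}(\cdot-x)$ pointwise with $|f_n'|\le 1$ and $f_n\to|\cdot-x|$. Since $\sigma$ is globally Lipschitz and the paths of $X$ are $\beta$-Hölder for some $\beta\in(\tfrac12,H)$, the change-of-variables formula for fractional integrals \cite[Theorem 4.3.1]{za} applies to the smooth $f_n$ and yields, pathwise,
\[
f_n(X_t) - f_n(X_0) = \int_0^t f_n'(X_s)\,b(X_s)\,ds + \int_0^t f_n'(X_s)\,\sigma(X_s)\,dB^H_s,
\]
where the last integral is Riemann--Stieltjes. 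Crucially, this identity carries no second-order term: for $H>\frac12$ the pathwise chain rule is of first order precisely because $X$ has vanishing quadratic variation, in contrast to the semimartingale case.

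First I would pass to the limit in the two easy terms. The left-hand side converges pointwise to $|X_t-x|-|X_0-x|$. For the drift term, $|f_n'|\le 1$ and $f_n'(X_s)\to\operatorname{sgn}(X_s-x)$ for every $s$ with $X_s\ne x$; the density assumption gives $\mathbb{E}\int_0^t \mathbf{1}_{\{X_s=x\}}\,ds=\int_0^t\mathbb{P}(X_s=x)\,ds=0$, so $\{s:X_s=x\}$ is Lebesgue-null almost surely, and dominated convergence yields $\int_0^t f_n'(X_s)b(X_s)\,ds\to\int_0^t\operatorname{sgn}(X_s-x)b(X_s)\,ds$. Consequently the Riemann--Stieltjes integrals $I_n:=\int_0^t f_n'(X_s)\sigma(X_s)\,dB^H_s$ must themselves converge, almost surely and along the full sequence, to
\[
L_t:=\big(|X_t-x|-|X_0-x|\big)-\int_0^t\operatorname{sgn}(X_s-x)\,b(X_s)\,ds.
\]

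The heart of the proof — and the main obstacle — is to identify $L_t$ with the Riemann--Stieltjes integral $\int_0^t\operatorname{sgn}(X_s-x)\sigma(X_s)\,dB^H_s$. The difficulty is structural: $\operatorname{sgn}(X_\cdot-x)$ typically has infinite $p$-variation for every $p$, since the path crosses the level $x$ infinitely often, so standard Young integration theory does not apply to the limiting integrand, and existence of its integral must come from cancellation rather than from a variation bound. I would compare, for a partition $\mathcal P=\{0=t_0<\dots<t_N=t\}$, the Riemann sum $S_{\mathcal P}:=\sum_i\operatorname{sgn}(X_{t_i}-x)\sigma(X_{t_i})(B^H_{t_{i+1}}-B^H_{t_i})$ with the analogous sum built from $f_n'$, which converges to $I_n$ as $\|\mathcal P\|\to0$ because $f_n'(X_\cdot)\sigma(X_\cdot)$ is $\beta$-Hölder and $\beta+H>1$. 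The discrepancy between the two sums is supported on indices with $|X_{t_i}-x|\le c\,n^{-1/2}$, and its control rests on two inputs. The first is the zero quadratic variation of $B^H$, i.e. $\sum_i(B^H_{t_{i+1}}-B^H_{t_i})^2\to0$, which suppresses the contribution of the many small increments made over short excursions near $x$. The second is the $L^1$-density bound $\sup_y p_s(y)\in L^1([0,T])$, which controls the expected occupation time through $\int_0^t\mathbb{P}(|X_s-x|\le\varepsilon)\,ds\le 2\varepsilon\,\|\sup_y p_\cdot(y)\|_{L^1([0,T])}$ and hence the mass of near-level times.

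The delicate point, which I expect to be the principal technical hurdle, is that a crude Cauchy--Schwarz split of the discrepancy is too lossy: bounding it by the square root of the count of near-level indices times the square root of $\sum_i(B^H_{t_{i+1}}-B^H_{t_i})^2$ diverges, since the count grows like $\|\mathcal P\|^{-1}$ while the quadratic variation only decays algebraically. Instead one must exploit the $\beta$-Hölder continuity of $s\mapsto\sigma(X_s)$ together with the excursion structure at the level, so that the near-level increments of $B^H$ enter through a genuinely second-order, quadratic-variation-type quantity that vanishes, rather than through a first moment. Once the discrepancy is shown to vanish as first $\|\mathcal P\|\to0$ and then $n\to\infty$, one concludes $S_{\mathcal P}\to L_t$, so that the Riemann--Stieltjes integral of $\operatorname{sgn}(X_\cdot-x)\sigma(X_\cdot)$ exists and equals $L_t$, which is exactly the asserted identity. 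Here the global Lipschitz continuity of $\sigma$ (ensuring $\sigma(X)$ inherits the Hölder regularity of $X$) and the Hölder regularity of the paths are precisely what make the approximating Young integrals well defined and the excursion estimates tractable, while the density hypothesis furnishes the quantitative control of the time spent near the level.
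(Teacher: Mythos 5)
Your setup (mollification, the pathwise chain rule with no correction term, dominated convergence for the drift using the density to kill the level set $\{s:X_s=x\}$, and the observation that $I_n$ must then converge a.s.\ to $L_t$) matches the paper's strategy. But the heart of the argument --- showing that $L_t$ coincides with a well-defined pathwise integral of $\operatorname{sgn}(X_\cdot-x)\sigma(X_\cdot)$ --- is exactly where your proposal stops being a proof. You correctly diagnose that the limiting integrand has unbounded $p$-variation so Young's theory does not apply, and you correctly observe that the naive Cauchy--Schwarz bound on the Riemann-sum discrepancy diverges; but the replacement you invoke (``exploit the $\beta$-H\"older continuity \dots together with the excursion structure \dots so that the near-level increments enter through a genuinely second-order quantity'') is never constructed. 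No quantitative estimate combining your two inputs (vanishing quadratic variation and the occupation-time bound $\int_0^t\mathbb{P}(|X_s-x|\le\varepsilon)\,ds\lesssim\varepsilon$) is given, and you have not even fixed what the limiting integral means: classical Riemann--Stieltjes convergence along arbitrary partitions for an integrand of infinite $p$-variation is a strictly stronger statement than what the theorem needs, and it is not clear it holds at all. This is a genuine gap, and it is not a small one --- the existence and identification of the limit integral is the entire nontrivial content of the theorem.

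For comparison, the paper does not attempt any Riemann-sum cancellation. It interprets the integral in the generalized (fractional, Z\"ahle-type) Lebesgue--Stieltjes sense, which is a continuous functional of the integrand with respect to the weighted norm
$\Vert g\Vert_{2,\beta} = \int_0^T |g(s)|s^{-\beta}\,ds + \int_0^T\int_0^T |g(t)-g(s)|\,|t-s|^{-1-\beta}\,ds\,dt$
for $\beta<H$. Proposition~\ref{pathterm} reduces everything to showing $\Vert f_n'(X_\cdot)\sigma(X_\cdot)-f'(X_\cdot)\sigma(X_\cdot)\Vert_{2,\beta}\to 0$; the Lipschitz continuity of $\sigma$ and the H\"older continuity of $X$ peel off the $\sigma$ factor, and the remaining convergence $\Vert f_n'(X_\cdot)-f'(X_\cdot)\Vert_{2,\beta}\to 0$ is supplied by the $(s,1)$-variability of the solution (see \cite[Example 3.12]{chenlv}, \cite[Example 3.9]{hinztv} and \cite[Lemma 4.38]{hinztv}), which is where the density hypothesis $\sup_x p_t(x)\in L^1([0,T])$ actually does its work. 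In other words, the cancellation you hope to extract from excursions and quadratic variation is packaged, in the paper, into membership of the limiting integrand in a fractional Sobolev-type space on which the integral is continuous. To repair your route you would either have to prove such a norm bound yourself (essentially reproducing the paper's argument) or carry out the excursion analysis in full, which is harder than the theorem and is not available in the literature.
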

	
	After mollification, the convergence of the drift term follows directly by using the defining SDE and the dominated convergence theorem. Thus the proof of Theorem~\ref{main2} follows from Proposition~\ref{pathterm}, stated and proved below.
	
	\begin{prop}\label{pathterm}
		Let $f$ be a convex function with right-derivative $f'$ and let $f_n$ be given by \eqref{fnapp}. Then
		$$ \lim_{n \to \infty} \int_0^T f_n'(X_s) \sigma(X_s) dB^H_s = \int_0^T f'(X_s) \sigma(X_s) dB^H_s$$
		almost surely for any $T<\infty$.
	\end{prop}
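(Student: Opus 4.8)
The plan is to avoid estimating the fractional integral $\int_0^T f_n'(X_s)\sigma(X_s)\,dB^H_s$ directly. Indeed, mollification forces the Hölder seminorm of $s \mapsto f_n'(X_s)$ to blow up like $\sqrt n$, so the Young estimates are not uniform in $n$ and a brute-force passage to the limit under the integral sign fails. Instead I would move the limit onto more tractable quantities by means of the pathwise change of variables already used in \eqref{tchange}. For each fixed $n$ the function $f_n$ is $C^\infty$, so that formula gives
\[
\int_0^T f_n'(X_s)\sigma(X_s)\,dB^H_s = f_n(X_T) - f_n(X_0) - \int_0^T f_n'(X_s)\, b(X_s)\,ds,
\]
where every integral appearing here is a genuine Riemann--Stieltjes (Young) integral. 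It then suffices to pass to the limit on the right and to identify the result with the Riemann--Stieltjes integral of the discontinuous integrand $f'(X_\cdot)\sigma(X_\cdot)$.

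The boundary term is immediate: writing $f_n' = f' * \rho_{1/n}$ and $f_n(z) = \int_0^z f_n'(y)\,dy$ (which reduces to \eqref{fnapp} when $f = |\cdot|$), the derivatives $f_n'$ are bounded on each compact set by the local supremum of $f'$, and $f_n \to f$ locally uniformly. Since $X$ has continuous paths, its range $X([0,T])$ is a.s. compact, whence $f_n(X_T) - f_n(X_0) \to f(X_T) - f(X_0)$ almost surely.

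For the drift term I would argue by dominated convergence. The right-derivative $f'$ of a convex function has at most countably many discontinuities, forming a set $D$. By the hypothesis that $X_s$ admits a density $p_s$ for $s \in (0,T]$, Fubini gives
\[
\mathbb{E}\!\left[\int_0^T \mathbf{1}_{\{X_s \in D\}}\,ds\right] = \int_0^T \mathbb{P}(X_s \in D)\,ds = 0,
\]
because $D$ is countable; hence $f_n'(X_s) \to f'(X_s)$ for Lebesgue-almost every $s$, almost surely. Combined with the uniform bound $|f_n'(X_s)| \le \sup_{z \in X([0,T])} |f'(z)| < \infty$ and the integrability of $b(X_\cdot)$ over $[0,T]$ built into \eqref{SDE10}, dominated convergence yields $\int_0^T f_n'(X_s)\,b(X_s)\,ds \to \int_0^T f'(X_s)\,b(X_s)\,ds$ almost surely. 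Therefore the left-hand side converges almost surely to $f(X_T) - f(X_0) - \int_0^T f'(X_s)\, b(X_s)\,ds$.

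The remaining and genuinely delicate step is to identify this limit with the Riemann--Stieltjes integral $\int_0^T f'(X_s)\sigma(X_s)\,dB^H_s$ of the discontinuous integrand, and this is where $H > \tfrac12$ is essential. Since $B^H$, and hence $X$, has zero quadratic variation, the passage from the smooth change of variables to the convex function $f$ produces no local-time correction, so that the limit is exactly the pathwise integral and not the integral plus an additional singular term. I would make this precise by showing directly that the Riemann--Stieltjes sums $\sum_i f'(X_{t_i})\sigma(X_{t_i})\bigl(B^H_{t_{i+1}} - B^H_{t_i}\bigr)$ converge as the mesh tends to zero and agree with the limit found above, controlling the partition intervals on which $X$ meets $D$ through the occupation bound $\mathbb{E}\bigl[\int_0^T \mathbf{1}_{\{X_s \in A\}}\,ds\bigr] \le |A|\,\|\sup_x p_s(x)\|_{L^1([0,T])}$, which is exactly where the hypothesis $\sup_x p_t(x) \in L^1([0,T])$ is used. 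Establishing this pathwise integral, rather than the mollifier blow-up, is the main obstacle of the proof.
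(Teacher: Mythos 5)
Your first three steps are sound: invoking the change-of-variables formula \eqref{tchange} for the smooth $f_n$, passing to the limit in $f_n(X_T)-f_n(X_0)$ by local uniform convergence, and handling the drift term by dominated convergence (using the density hypothesis to rule out $X_s$ sitting in the countable discontinuity set of $f'$ for a positive-measure set of times) correctly show that $\int_0^T f_n'(X_s)\sigma(X_s)\,dB^H_s$ converges almost surely to $f(X_T)-f(X_0)-\int_0^T f'(X_s)b(X_s)\,ds$. But the proposition's actual content is the identification of this limit with the pathwise integral $\int_0^T f'(X_s)\sigma(X_s)\,dB^H_s$ --- including the assertion that this integral of a \emph{discontinuous} integrand exists at all --- and this is exactly the step you defer to ``showing directly that the Riemann--Stieltjes sums converge.'' That is a genuine gap, not a technicality: the integrand $s\mapsto f'(X_s)$ (e.g.\ $\operatorname{sgn}(X_s)$) jumps by a fixed amount at every level crossing, and since the level set of $X$ is typically uncountable, this path has infinite $p$-variation for every $p$, so classical Young/Riemann--Stieltjes theory gives neither existence nor convergence of the sums. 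Your appeal to zero quadratic variation is also not sufficient by itself: the F\"ollmer-type argument removes correction terms for $C^2$ functions, but for convex $f$ the Riemann-sum error $\sum_i\bigl[f(X_{t_{i+1}})-f(X_{t_i})-f'(X_{t_i})(X_{t_{i+1}}-X_{t_i})\bigr]$ is supported on crossing intervals, and killing it requires a quantitative bound on the \emph{number} of partition intervals of mesh $h$ meeting the level set (roughly, that this number times $h^{H-\varepsilon}$ vanishes). The occupation bound $\E\bigl[\int_0^T \mathbf{1}_{\{X_s\in A\}}ds\bigr]\le |A|\,\|\sup_x p_\cdot(x)\|_{L^1}$ controls time spent near the level, not the count of crossing intervals; upgrading it to an almost sure crossing estimate valid for all partitions is substantial unfinished work.

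The paper takes a different and self-contained route that resolves precisely this difficulty: it works with the generalized Lebesgue--Stieltjes (fractional-calculus) integral, which is continuous with respect to the norm
$$
\Vert g\Vert_{2,\beta}=\int_0^T \frac{|g(s)|}{s^\beta}\,ds+\int_0^T\!\!\int_0^T \frac{|g(t)-g(s)|}{|t-s|^{1+\beta}}\,ds\,dt ,\qquad \beta<H,
$$
so it suffices to show $\Vert f_n'(X_\cdot)\sigma(X_\cdot)-f'(X_\cdot)\sigma(X_\cdot)\Vert_{2,\beta}\to 0$ almost surely. After splitting off the term $|\sigma(X_t)-\sigma(X_s)||f_n'(X_s)-f'(X_s)|$ (handled by Lipschitz continuity of $\sigma$ and H\"older continuity of $X$), the key input is the ``variability'' framework of \cite{chenlv,hinztv}: the solution $X$ is $(s,1)$-variable, and \cite[Lemma 4.38]{hinztv} yields $\Vert f_n'(X_\cdot)-f'(X_\cdot)\Vert_{2,\beta}\to 0$ almost surely. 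This simultaneously establishes existence of the limiting integral and the convergence --- exactly the two things your sketch leaves open. If you want to pursue your route, you would essentially have to reprove that variability lemma via crossing estimates; as written, the proposal proves convergence of the left-hand side but not that the limit is the claimed integral.
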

	\begin{proof}
		Following, e.g. \cite{chenlv,hinztv}, it suffices to show that, for some $\beta<H$,
		$$
		\Vert f'_n(X_\cdot)\sigma(X_\cdot) - f'(X_\cdot)\sigma(X_\cdot)\Vert_{2,\beta} \to 0,
		$$
		where
		$$
		\Vert g\Vert_{2,\beta} = \int_0^T \frac{|g(s)|}{s^\beta}ds + \int_0^T \int_0^T \frac{|g(t)-g(s)|}{|t-s|^{1+\beta}}dsdt.
		$$
		Since $\sigma$ is locally bounded and $\text{supp}(X)$ is compact by continuity of $X$, it follows that 
		\begin{equation*}
			\begin{split}
				&\int_0^T \frac{|f'_n(X_s)\sigma(X_s) - f'(X_s)\sigma(X_s)|}{s^\beta}ds \\
				&\leq C\int_0^T \frac{|f'_n(X_s)-f'(X_s)|}{s^\beta}ds\\
				&\leq C\Vert f'_n(X_\cdot)-f'(X_\cdot)\Vert_{2,\beta}.
			\end{split}
		\end{equation*}
		Similarly,
		\begin{equation*}
			\begin{split}
				&\int_0^T\int_0^T|f'_n(X_t)\sigma(X_t)-f'_n(X_s)\sigma(X_s) - f'(X_s)\sigma(X_s)+f'(X_t)\sigma(X_t)||t-s|^{-1-\beta}dsdt \\
				&\leq \int_0^T\int_0^T|\sigma(X_t)||f'_n(X_t)-f'(X_t)-f'_n(X_s)+f'(X_s)||t-s|^{-1-\beta}dsdt \\
				&+\int_0^T\int_0^T|\sigma(X_t)-\sigma(X_s)||f'_n(X_s) - f'(X_s)||t-s|^{-1-\beta}dsdt\\
				&\leq C\int_0^T\int_0^T|f'_n(X_t)-f'(X_t)-f'_n(X_s)+f'(X_s)||t-s|^{-1-\beta}dsdt \\
				&+\int_0^T\int_0^T|\sigma(X_t)-\sigma(X_s)||f'_n(X_s) - f'(X_s)||t-s|^{-1-\beta}dsdt\\
				&\leq C\int_0^T\int_0^T|f'_n(X_t)-f'(X_t)-f'_n(X_s)+f'(X_s)||t-s|^{-1-\beta}dsdt \\
				&+\Vert f'_n(X_\cdot)-f'(X_\cdot)\Vert_{2,\beta}.
			\end{split}
		\end{equation*}
		Now the solution $X$ is, in the terminology of \cite{hinztv}, $(s,1)$ variable for any $s\in (0,1)$ (see \cite[Example 3.12]{chenlv} and \cite[Example 3.9]{hinztv}). Hence, by \cite[Lemma 4.38]{hinztv} we have
		$$
		\Vert f'_n(X_\cdot)-f'(X_\cdot)\Vert_{2,\beta} \to 0
		$$
		almost surely and, consequently, it suffices to show 
		$$
		\lim_{n \to \infty}\int_0^T \int_0^T |\sigma(X_t)-\sigma(X_s)||f'_n(X_s) - f'(X_s)||t-s|^{-1-\beta}dsdt = 0.
		$$
		For this we use Lipschitz continuity of $\sigma$ and $(H-\varepsilon)$-H\"older continuity of $X$ to get
		\begin{equation*}
			\begin{split}
				&\int_0^T \int_0^T |\sigma(X_t)-\sigma(X_s)||f'_n(X_s) - f'(X_s)||t-s|^{-1-\beta}dtds \\
				&\leq C \int_0^T |f'_n(X_s)-f'(X_s)| \int_0^T |t-s|^{H-\varepsilon-\beta-1}dtds \\
				&\leq C \int_0^T |f'_n(X_s)-f'(X_s)|ds\\
				&\to 0,
			\end{split}
		\end{equation*}
		as $n \to \infty$. This concludes the proof.
	\end{proof}
	
	It is known that Tanaka-type formulas can be a useful analytical tool in the study of SDEs with irregular drift. An application of Theorem~\ref{main2} arises in such a setting. For instance, consider the example
	\[
	dX_t = b(X_t)\, dt + \sigma\, dB^H_t,
	\]
	where \( \sigma \in \mathbb{R} \setminus \{0\} \) is constant and \( b \in C^\alpha(\mathbb{R}) \) is a Hölder continuous function with some exponent \( \alpha > 1 - \frac{1}{2H} \). Under these assumptions, it was shown in \cite[Theorem 1.3]{li2023non} that the solution \( (X_t)_{t \in [0,T]} \) admits a density with Gaussian-type upper bound of the form
	\[
	p_t(x) \leq C_1 t^{-H} \exp\left( - C_2 \frac{(x - x_0)^2}{t^{2H}} \right).
	\]
	This ensures that the assumptions of Theorem~\ref{main2} are satisfied. In particular, our version of the pathwise Tanaka formula holds for this class of equations despite the lack of Lipschitz regularity in the drift, and reads explicitly as
	$$
	|X_t - x| = |X_0 - x| + \int_0^t \operatorname{sgn}(X_s - x)\, b(X_s)\, ds + \sigma \int_0^t \operatorname{sgn}(X_s - x)\, \, dB^H_s.$$
	
	Finally, we would like to remark that the pathwise Tanaka formula established in this section differs fundamentally from the Skorokhod-based version discussed earlier. Most notably, it does not feature a local time or correction term. This reflects the fact that fractional Brownian motion with \( H > \frac{1}{2} \) has zero quadratic variation, so the Riemann--Stieltjes integral suffices to describe the dynamics without additional trace terms. While the Skorokhod-based formula captures finer structural information through a Malliavin trace term, the pathwise version applies under a different set of assumptions and can be more accessible in certain irregular settings. The absence of a local time analogue highlights a fundamental difference in the nature of these two formulations.

	\bibliographystyle{amsplain}
	\bibliography{lit}

\providecommand{\bysame}{\leavevmode\hbox to3em{\hrulefill}\thinspace}
\providecommand{\MR}{\relax\ifhmode\unskip\space\fi MR }
\providecommand{\MRhref}[2]{%
  \href{http://www.ams.org/mathscinet-getitem?mr=#1}{#2}
}
\providecommand{\href}[2]{#2}
\begin{thebibliography}{10}

\bibitem{alos1}
Elisa Al{\`o}s, Olivier Mazet, and David Nualart, \emph{{Stochastic calculus
  with respect to Gaussian processes}}, The Annals of Probability \textbf{29}
  (2001), no.~2, 766--801.

\bibitem{alos2}
Elisa Al{\`o}s and David Nualart, \emph{{Stochastic integration with respect to
  the fractional Brownian motion}}, Stochastics and Stochastic Reports
  \textbf{75} (2003), no.~3, 129--152.

\bibitem{baudoin}
Fabrice Baudoin, Eulalia Nualart, Cheng Ouyang, and Samy Tindel, \emph{{On
  probability laws of solutions to differential systems driven by a fractional
  Brownian motion}}, The Annals of Probability \textbf{44} (2016), no.~4,
  2554--2590.

\bibitem{Bender2003}
Christian Bender, \emph{{An It{\^o} formula for generalized functionals of a
  fractional Brownian motion with arbitrary Hurst parameter}}, Stochastic
  Processes and their Applications \textbf{104} (2003), no.~1, 81--106.

\bibitem{biagini2004introduction}
Francesca Biagini, Bernt {\O}ksendal, Agn{\`e}s Sulem, and Naomi Wallner,
  \emph{{An introduction to white--noise theory and Malliavin calculus for
  fractional Brownian motion}}, Proceedings of the Royal Society of London.
  Series A: Mathematical, Physical and Engineering Sciences \textbf{460}
  (2004), no.~2041, 347--372.

\bibitem{chenlv}
Zhe Chen, Lasse Leskel{\"a}, and Lauri Viitasaari, \emph{{Pathwise Stieltjes
  integrals of discontinuously evaluated stochastic processes}}, Stochastic
  Processes and their Applications \textbf{129} (2019), no.~8, 2723--2757.

\bibitem{coutin}
Laure Coutin, David Nualart, and Ciprian~A Tudor, \emph{{Tanaka formula for the
  fractional Brownian motion}}, Stochastic Processes and their Applications
  \textbf{94} (2001), no.~2, 301--315.

\bibitem{Gradinaru2003}
Mihai Gradinaru, Francesco Russo, and Pierre Vallois, \emph{{Generalized
  covariations, local time and Stratonovich It{\^o}'s formula for fractional
  Brownian motion with Hurst index $H\geq \frac{1}{4}$}}, The Annals of
  Probability \textbf{31} (2003), no.~4, 1772--1820.

\bibitem{hinztv}
Michael Hinz, Jonas~M T{\"o}lle, and Lauri Viitasaari, \emph{{Variability of
  paths and differential equations with BV-coefficients}}, Annales de
  l'Institut Henri Poincar{\'e}: Probabilit{\'e}s et Statistiques \textbf{59}
  (2023), no.~4, 2036--2082.

\bibitem{hu2005weighted}
Yaozhong Hu, Bernt {\O}ksendal, and Donna~Mary Salopek, \emph{{Weighted local
  time for fractional Brownian motion and applications to finance}}, Stochastic
  analysis and applications \textbf{23} (2005), no.~1, 15--30.

\bibitem{li2023non}
Xue-Mei Li, Fabien Panloup, and Julian Sieber, \emph{On the (non) stationary
  density of fractional-driven stochastic differential equations}, The Annals
  of Probability \textbf{51} (2023), no.~6, 2056--2085.

\bibitem{lou}
Shuwen Lou and Cheng Ouyang, \emph{{Local times of stochastic differential
  equations driven by fractional Brownian motions}}, Stochastic Processes and
  their Applications \textbf{127} (2017), no.~11, 3643--3660.

\bibitem{NualartBook}
David Nualart, \emph{{The Malliavin Calculus and Related Topics}}, 2nd ed.,
  Probability and Its Applications, Springer-Verlag, Berlin, 2006.

\bibitem{NR}
David Nualart and Aurel R\v{a}\c{s}canu, \emph{Differential equations driven by
  fractional {Brownian} motion}, Collectanea Mathematica \textbf{53} (2002),
  no.~1, 55--81.

\bibitem{NS}
David Nualart and Bruno Saussereau, \emph{{Malliavin calculus for stochastic
  differential equations driven by a fractional Brownian motion}}, Stochastic
  Processes and their Applications \textbf{119} (2009), no.~2, 391--409.

\bibitem{pipiras2001classes}
Vladas Pipiras and Murad~S Taqqu, \emph{{Are classes of deterministic
  integrands for fractional Brownian motion on an interval complete?}},
  Bernoulli (2001), no.~6, 873--897.

\bibitem{ssv}
Tommi Sottinen, Ercan S{\"o}nmez, and Lauri Viitasaari, \emph{On the existence
  and regularity of local times}, Electronic Journal of Probability \textbf{29}
  (2024), 1--27.

\bibitem{torres}
Soledad Torres and Lauri Viitasaari, \emph{Stochastic differential equations
  with discontinuous diffusion coefficients}, Theory of Probability and
  Mathematical Statistics \textbf{109} (2023), 159--175.

\bibitem{yantian}
Litan Yan and Ming Tian, \emph{{On The Local Times of Fractional
  Ornstein--Uhlenbeck Process}}, Letters in Mathematical Physics \textbf{73}
  (2005), no.~3, 209--220.

\bibitem{za}
Martina Z{\"a}hle, \emph{{Integration with respect to fractal functions and
  stochastic calculus. I}}, Probability Theory and Related Fields \textbf{111}
  (1998), no.~3, 333--374.

\end{thebibliography}
	
\end{document}